\newlength{\guillotine}
	\newtheorem{thm}{Theorem}[section]
	\newtheorem{lemma}[thm]{Lemma}
	\newtheorem{definition}[thm]{Definition}
	\theoremstyle{remark}
	\newtheorem{rem}[thm]{Remark}
	\newcommand{\RR}{\mathbb R}
	\newcommand{\NN}{\mathbb N}
	\newcommand{\calg}{\mathcal G}
	\newcommand{\calv}{\mathcal V}
    \newcommand{\alf}{\alpha}
	\newcommand{\de}{\delta}
	\newcommand{\la}{\lambda}
	\newcommand{\De}{\Delta}
	\newcommand{\ostar}{\mathbin{\mathpalette\make@circled\star}}
	\newcommand{\make@circled}[2]{%
	  \ooalign{$\m@th#1\smallbigcirc{#1}$\cr\hidewidth$\m@th#1#2$\hidewidth\cr}%
	}
	\newcommand{\smallbigcirc}[1]{%
	  \vcenter{\hbox{\scalebox{0.77778}{$\m@th#1\bigcirc$}}}%
	}
    \newcommand{\un}{\underline}
	\DeclareMathOperator{\Jac}{Jac}
	\DeclareMathOperator{\Leb}{Leb}
	\DeclareMathOperator{\area}{area}
	\DeclareMathOperator{\diam}{diam}
\begin{document}

\title{An upper bound on the dimension of the Rauzy gasket}
\author{Mark Pollicott\thanks{Supported by ERC grant 833802-resonances and EPSRC grant EP/T001674/1.}{$\;$}
and Benedict Sewell\thanks{Supported by the Alfr\'ed R\'enyi Institute Young Researcher Fund.}}

\maketitle

\section{Introduction}

The Rauzy Gasket $\calg$ is a compact subset of the standard 2-simplex,
	$
		    \De
		=
		    \{
		        (x, y, z)
		    \;:\;
		        x, y, z \geq 0,\ %
		        x + y + z = 1
		      \}.
	$
It plays the role of an exceptional set in the theory of interval exchange transformations and other settings, and is the limit set of the iterated function scheme for the three weak projectivised linear maps $T_1, T_2, T_3: \De \to \De$, defined by
	\begin{gather*}
		 	T_1(x,y,z)
		=
		 	\left(
		  		\frac{1}{2-x},
		  		\frac{y}{2-x},
		  		\frac{z}{2-x}
		  	\right),
	\\
		  	T_2(x,y,z)
		=
			  \left(
				  \frac{x}{2-y},
				  \frac{1}{2-y},
				  \frac{z}{2-y}
			  \right),
	\\
			T_3(x,y,z)
		=
	  		\left(
	  			\frac{x}{2-z},
	  			\frac{y}{2-z},
	  			\frac{1}{2-z}
	  		\right);
	\end{gather*}
i.e., $\calg$ is the smallest non-trivial closed set such that $\calg = \bigcup_{j=1}^3 T_j (\calg)$.

 	\begin{figure}
 		\centerline{\includegraphics[height=5.8cm]{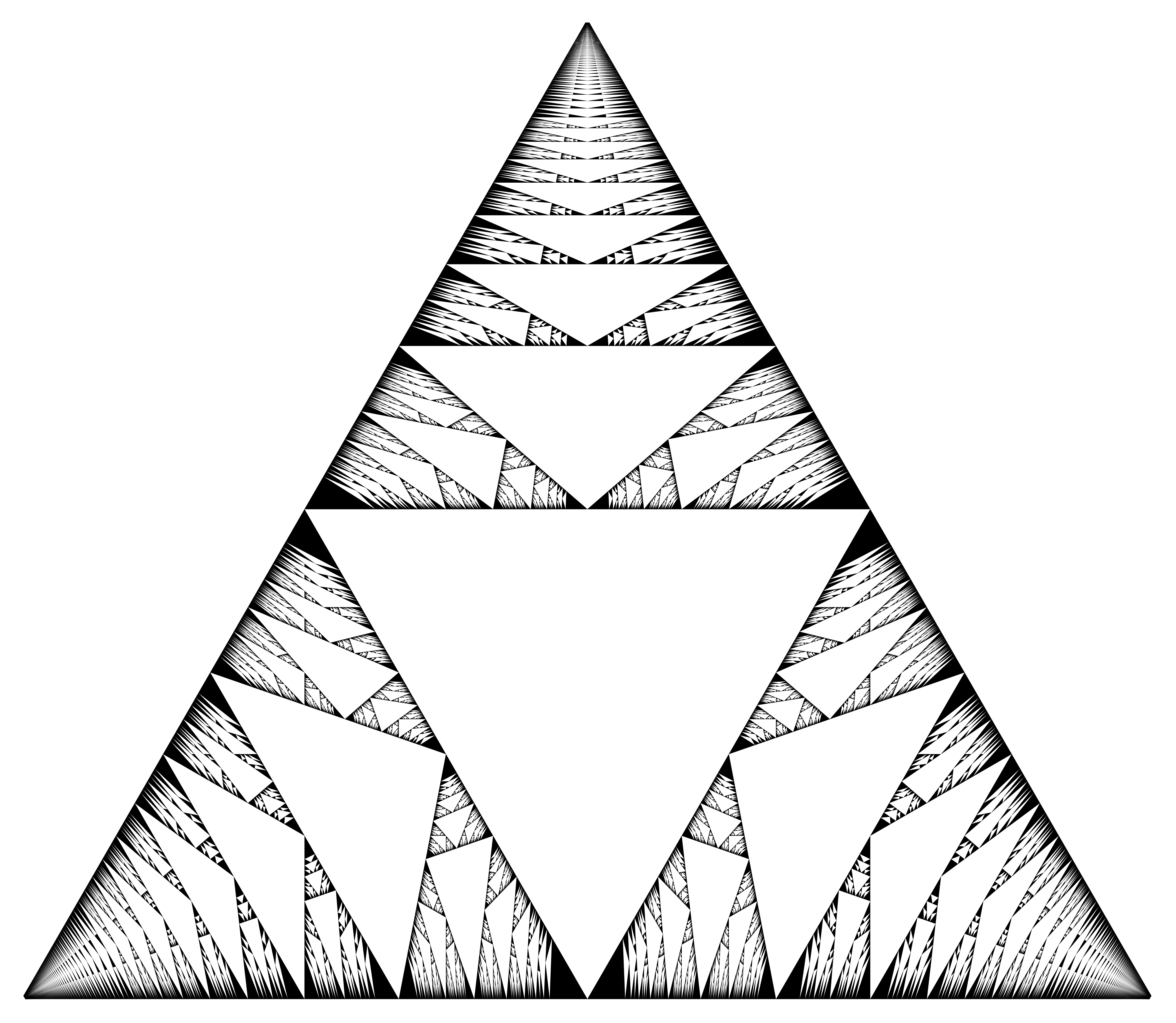}}
 		\caption{The Rauzy gasket}
	\end{figure}
\smallskip
The gasket has an interesting history, appearing for the first time in 1991 in the work of Arnoux and Rauzy \cite{ar}, in the context of interval exchange transformations, where it was conjectured that $\Leb(\calg)=0$.
The gasket was rediscovered by Levitt in 1993 \cite{lev}, in a paper which also included a proof (due to Yoccoz) that  $\Leb(\calg)=0$.
The gasket $\calg$ emerged for a third time in the work of De Leo and Dynnikov \cite{dd}, this time in the context of Novikov's theory of magnetic induction on monocrystals (see \cite{rauzy dynamics} for the dichotomy between this and \cite{ar}). They gave an alternative proof that $\Leb(\calg)=0$ and proposed the stronger result $\dim_H(\calg)<2$.
Novikov and Maltsev \cite{mn} also conjectured the stronger bound $\dim_H(\calg) < 2$, which was rigorously established by Avila, Hubert and Skripchenko \cite{ahs}.
Empirical estimates in \cite{dd} suggest $\dim_H(\calg) \approx 1.72$, and a lower bound was shown in \cite{G-R-Matheus}. Lastly, Fougeron used semiflows and thermodynamic techniques to show $\dim_H(\calg) < 1.825$ \cite{fou}.   Using completely elementary methods, we show the following improved upper bound.

\begin{thm}\label{short}
	$\dim_H(\calg) \leq 1.7415$.
\end{thm}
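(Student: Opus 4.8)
The plan is to produce, for every $\delta>0$, a cover of $\calg$ by sets of diameter at most $\delta$ whose diameters raised to the power $s:=1.7415$ sum to a bounded constant; this yields $\mathcal H^{s}(\calg)<\infty$ and hence $\dim_H(\calg)\le s$. The first move is to make everything explicit and rational: represent $T_1,T_2,T_3$ projectively by the integer matrices $M_1,M_2,M_3$ (so $M_1=I+e_1(e_2+e_3)^{\mathsf T}$, and cyclically), so that each composition $T_w$ for a finite word $w=w_1\cdots w_n$ corresponds to $M_w:=M_{w_1}\cdots M_{w_n}$. Since projective maps carry lines to lines, $T_w(\Delta)$ is the genuine Euclidean triangle whose vertices are the three columns of $M_w$ renormalised to lie in $\Delta$; thus its side lengths, its area, and its eccentricity $\sigma_1/\sigma_2$ (the ratio of its two principal axes) are explicit rational functions of the entries of $M_w$, and the whole argument becomes a finite computation with rational matrices. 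The $T_j$ evidently satisfy the open set condition on $\Delta$, so the triangles $\{T_w(\Delta)\}_{|w|=n}$ cover $\calg$ with disjoint interiors, and their diameters tend to $0$ along every infinite word, so refinements furnish covers of arbitrarily small mesh.

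The real obstruction is that the three vertices of $\Delta$ are parabolic fixed points: $DT_i(e_i)$ is the identity, so a naive Bowen/pressure argument fails (the pressure function has a flat branch and never becomes negative). I would neutralise this by \emph{inducing}, i.e.\ by jumping over maximal runs. The only way $T_w(\Delta)$ can reach a vertex $e_i$ is for $w$ to be a constant word $i^k$, and in that case $T_{i^{k}}(\Delta)$ is, remarkably, exactly a homothetic copy of $\Delta$ centred at $e_i$ with ratio $1/(k+1)$; hence a neighbourhood of $e_i$ in $\calg$ is covered by the single triangle $T_{i^{K}}(\Delta)$, of $s$-cost $\lesssim (k+1)^{-s}$, and the total contribution of all such runs is controlled by $\sum_k (k+1)^{-s}$, which converges because $s>1$. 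Away from the vertices the induced maps $G_{(i,k)}:=T_i^{\,k}$, composed only when consecutive symbols differ, are genuine uniform contractions (each is applied to a region contained in some $T_j(\Delta)$ with $j\ne i$, hence bounded away from its own parabolic fixed point $e_i$), and the resulting induced iterated function system --- a countable subshift of finite type --- is uniformly hyperbolic with bounded distortion. This is the key point: once $s$ is comfortably above $1$ the parabolicity is harmless, and all of the difficulty is transferred to this hyperbolic core.

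For the core one must bound $\sum\bigl(\text{covering cost of }G_v(\Delta)\bigr)$ over long induced words $v$. Using a single ball per triangle (a pure $\diam^{s}$ estimate) is far too wasteful to reach $1.7415$, because the gasket is strongly anisotropic; instead each triangle $G_v(\Delta)$, with principal axes $\sigma_1\ge\sigma_2$, should be covered by about $\sigma_1/\sigma_2$ balls of radius $\sigma_2$, at $s$-cost $\asymp\sigma_1\sigma_2^{\,s-1}$, the affine singular-value function $\phi^{s}$. Since $\phi^{s}$ is submultiplicative along the matrix cocycle, it suffices to bound a transfer-type operator: partition the finitely many regions visited by the induced maps --- together with, if needed for sharpness, the projective line of stretch directions --- into finitely many cells; record, for each cell and each admissible $G_{(i,k)}$, the image cell, the transported direction, and a rigorous upper bound (via interval arithmetic) on the induced factor $\sigma_1\sigma_2^{\,s-1}$; truncate the run length at some $K$ (the tail $\sum_{k>K}$ being negligible by the same $\zeta$-series estimate); and assemble this data into a finite non-negative matrix $A_s$. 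The cover sums then stay bounded --- which is all that is needed for $\dim_H(\calg)\le s$ --- once one exhibits a strictly positive vector $h$ with $A_s h\le h$, a discretised sub-conformality (Bowen-type) inequality; this now has a chance of holding at $s=1.7415$ precisely because inducing has removed the weight-$1$ self-loops that the parabolic fixed points would otherwise create.

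The main obstacle is quantitative. One must choose the partition --- and, if required, the directional refinement and the truncation level $K$ --- fine enough, and pin the exponent down to $1.7415$ sharply enough, that the sub-invariance $A_sh\le h$ genuinely closes, while keeping $A_s$ small enough to be verified by an elementary finite computation. A coarser partition overestimates the cocycle weights and forces a larger exponent, so this is a genuine balancing act: it is exactly what separates the constant $1.7415$ from the conjectured value near $1.72$, and it is where essentially all of the effort goes.
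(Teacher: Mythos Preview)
Your strategy is sound and, in fact, runs parallel to the paper's own argument at every stage: your anisotropic cost $\sigma_1\sigma_2^{\,s-1}$ is exactly the paper's $\area(\De_{\un i})^\de\diam(\De_{\un i})^{1-\de}$ with $s=1+\de$ (Lemma~\ref{dimbound}); your inducing over maximal runs is the paper's partition into the sets $A_{n,k}$ and $A_{n,\un v}$; and your sub-invariance inequality $A_sh\le h$ plays the role of the paper's condition $\sum_k(D^k)_{1,1}\le1$ in Theorem~\ref{long}, with the renewal theorem replacing the Perron--Frobenius argument you implicitly invoke. So the architecture is the same.

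The gap is that what you have written is a plan, not a proof of the specific constant $1.7415$. You say yourself that closing the inequality at this particular $s$ ``is where essentially all of the effort goes'', and none of that effort is present. The paper's actual contribution is precisely to make this step tractable: rather than interval arithmetic on a fine phase-space/direction partition, it proves the global analytic bound $\|DT_j(x)\|\le(2-x_j)^{-\la}$ with $\la=\tfrac32-\tfrac1{\sqrt3}$ (Lemma~\ref{diameter}), together with $\Jac T_j(x)=(2-x_j)^{-3}$, so that the transfer weights are explicit elementary functions of a single coordinate. This collapses your unspecified matrix $A_s$ to the concrete matrix $D$ of \S2, indexed by words of length $m{+}1$ up to symmetry, whose spectral condition can be checked by a single determinant and one numerical series; taking $m=9$ then yields $\de=0.7415$. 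Without an analogue of the $\la$-bound, your interval-arithmetic scheme would need a partition fine enough to resolve the anisotropy near the vertices, and it is not clear a priori that this stays small enough to reach $1.7415$ rather than some weaker bound.
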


The Rauzy Gasket has a number of interesting recent applications. Gamburd, Magee and Ronan \cite{gmr} showed asymptotic estimates for integer solutions of the Markov-Hurwitz equations featuring $\dim_H(\calg)$.
Hubert and Paris-Romaskevich in \cite{hp} considered triangular tiling billiards, modelling refraction in crystals. The gasket $\calg$ parameterises triangles admitting trajectories which escape non-linearly to infinity and closed orbits which approximate fractal-like sets.

In section 2 we give the technical result which leads to the bound in Theorem \ref{short}.  This is formulated in terms of certain infinite matrices.
In section 3 we give elementary preliminary bounds on the area and diameter of small triangles given as the images of $\De$ under compositions of the maps $T_1$, $T_2$ and $T_3$.
In section 4 we use these to obtain a bound for the dimension, provided an associated sequence of real numbers $X_n$ converges to zero.
In sections 5 and 6 we present the core of the proof. In section 5, we use the estimates from section 3 to bound $X_n$ in terms of expressions satisfying an iterative relation.
In section 6 we use the renewal theorem to deduce that $X_n \to 0$ under the hypotheses of Theorem \ref{long}. Finally, in section 7 we apply Theorem \ref{long} empirically to deduce the bound in Theorem \ref{short}.

A fuller account appears in \cite{thesis}.

\section{A formal statement}

The bound in Theorem \ref{short} is a special case of a decreasing sequence of upper bounds, indexed by a parameter $m \in \NN_{\geq 2}$. Each bound can be described using powers of an infinite matrix.
\begin{definition}[An index set]
	Let $\calv = \bigcup_{k=1}^{m-1}\calv_k$ denote the finite set where, for $k<m$,
		$$
				\calv_k
			:=
				\{1\}^k \times \{2\} \times \{1,2,3\}^{m-k}
			=
				\left\{
					(
					1^{k}, 2, v_{k+2}, \ldots, v_{m+1}
					)
				\ :\ %
					v_{k+2}, \ldots, v_{m+1} \in \{1,2,3\}
				\right\}
				,
		$$
	where we denote, e.g., $1^k = \overbrace{1, \ldots, 1}^{\smash{k}}$%
	.
 	i.e.,  $\calv$ is the family of strings of length $m+1$  beginning with a sequence of  $1$s of length $k$, followed by a $2$, then a sequence of $1$s,  $2$s and $3$s of length  $m-k$.
\end{definition}

\begin{rem}\label{sim}
	The elements of $\calv$ represent orbits in $\{1,2,3\}^{m+1}$ under the natural action of the dihedral group, excluding the orbits of $(1^m,2)$ and $(1^{m+1})$.

	More specifically, any two words
		$\un i, \un v\in\{1,2,3\}^{m+1}$ will be considered equivalent (written $\un i \sim \un v$)
	if there is some permutation $\pi$ of $\{1,2,3\}$ such that $\pi(i_j) = v_j$ for all $j=1,\ldots,m+1$.
	For example, if $i_1\neq i_2$, $(i_1,i_2^{m})$ is equivalent to $(1,2^{m})=: \ostar$, which we consider as a distinguished element of $\calv$.

 	For each $\un v \in \calv$ and $n \in \NN$, there will be $2 \times 3^{n-m}$ strings $\un i \in \{1,2,3\}^n$ for which the truncation  $(i_1, \ldots, i_{m+1}) \in \{1,2,3\}^{m+1}$ is equivalent to  $\un v$.

\end{rem}

For  $n \geq 1$ and $\un i = (i_1, \ldots, i_n) \in \{1, 2, 3\}^n$ we denote the composition $T_{\un i} = T_{i_1}\circ  \cdots  \circ T_{i_n}$ and its image
$\De_{\un i} =  T_{\un i}(\De)  \subset \De$.
The following is easily seen \cite{thesis}.

	\begin{lemma}
		Each $\De_{\un i}$,  $\un i \in \{1, 2, 3\}^n$, $n \geq 1$ is again a triangle.
	\end{lemma}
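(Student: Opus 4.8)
The plan is to realise each $T_j$ as the projectivisation of an invertible nonnegative $3\times 3$ matrix, to show that any such map carries a non-degenerate triangle lying in $\De$ to a non-degenerate triangle, and then to conclude by induction on $n$.

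First I would record the linear algebra behind the maps. Writing points of $\De$ as column vectors and using $x+y+z=1$, one checks that $T_j = P\circ A_j$, where $P(w) = w/\sigma(w)$ with $\sigma(w) := w_1+w_2+w_3$ is the radial projection onto $\{x+y+z=1\}$ (note $\sigma>0$ on the nonnegative cone away from the origin), and
$$
	A_1 = \begin{pmatrix} 1 & 1 & 1 \\ 0 & 1 & 0 \\ 0 & 0 & 1 \end{pmatrix}, \qquad
	A_2 = \begin{pmatrix} 1 & 0 & 0 \\ 1 & 1 & 1 \\ 0 & 0 & 1 \end{pmatrix}, \qquad
	A_3 = \begin{pmatrix} 1 & 0 & 0 \\ 0 & 1 & 0 \\ 1 & 1 & 1 \end{pmatrix};
$$
for instance, for $T_1$ the numerator vector is $(x+y+z,\,y,\,z) = A_1(x,y,z)^{\top}$, with $\sigma(A_1(x,y,z)^{\top}) = 1+y+z = 2-x$. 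Each $A_j$ has nonnegative entries, no zero column, and determinant $1$, so the same is true of any product $A_{\un i} := A_{i_1}\cdots A_{i_n}$, and $T_{\un i} = P\circ A_{\un i}$.

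Next I would prove the key step: if $B$ is invertible with nonnegative entries and $\De' \subseteq \De$ is the triangle with affinely independent vertices $p_1,p_2,p_3$, then $(P\circ B)(\De')$ is the triangle spanned by $P(Bp_1), P(Bp_2), P(Bp_3)$, and these three points are again affinely independent. Indeed, for $q = \sum_k \lambda_k p_k$ with $\lambda_k\geq 0$ and $\sum_k\lambda_k=1$ one has $Bq = \sum_k \lambda_k(Bp_k)$; putting $c_k := \sigma(Bp_k)>0$ (positive since $B$ is nonnegative and $Bp_k\neq 0$ by invertibility) and $\mu_k := c_k\lambda_k/\sum_\ell c_\ell\lambda_\ell$ gives
$$
	P(Bq) = \sum_k \mu_k\, P(Bp_k), \qquad \mu_k\geq 0, \quad \sum_k \mu_k = 1.
$$
The map $\lambda\mapsto\mu$ is a bijection of the parameter simplex onto itself, with inverse $\lambda_k = c_k^{-1}\mu_k/\sum_\ell c_\ell^{-1}\mu_\ell$, so $(P\circ B)(\De')$ is exactly the triangle spanned by the $P(Bp_k)$. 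Affine independence of these points is equivalent to their linear independence (they lie on $\{x+y+z=1\}$, and $P$ merely rescales within this plane), which follows from affine — hence linear — independence of $p_1,p_2,p_3$ and invertibility of $B$; so the image is non-degenerate.

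Finally, the lemma follows by induction on $n$. For $n=1$, $\De_{i_1} = T_{i_1}(\De)$ is, by the key step with $B = A_{i_1}$ and $\De'=\De$ (the triangle on the standard basis vectors), the non-degenerate triangle on the normalised columns of $A_{i_1}$, and it lies in $\De$ because $A_{i_1}$ preserves the nonnegative cone. For the inductive step, $\De_{(i_1,\ldots,i_n)} = T_{i_1}\bigl(\De_{(i_2,\ldots,i_n)}\bigr)$, where $\De_{(i_2,\ldots,i_n)}$ is by hypothesis a triangle contained in $\De$; the key step with $B = A_{i_1}$ then yields a non-degenerate triangle contained in $\De$. (Equivalently, one can apply the key step once with $B = A_{\un i}$.) I do not expect a genuine obstacle: the only delicate points are the bookkeeping facts that the scalars $c_k$ are strictly positive and that invertibility rules out degeneration, both immediate from the properties of the $A_j$ noted above; the substance is simply the identification of the $T_j$ as projective maps.
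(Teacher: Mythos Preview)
Your proof is correct. The identification $T_j = P\circ A_j$ with the given unimodular nonnegative matrices is right, the convex-combination computation showing that $(P\circ B)(\De')$ is exactly the triangle on the projected vertices is clean and standard, and the non-degeneracy argument via invertibility is sound; the induction then closes the loop.

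As for comparison: the paper does not actually supply a proof of this lemma in the text---it simply states that the result ``is easily seen'' and cites the second author's thesis. Your argument is precisely the natural one alluded to by the phrase ``weak projectivised linear maps'' used in the introduction, namely realising each $T_j$ as a projective map associated to an invertible nonnegative matrix, and it is almost certainly what the thesis contains. So there is no genuine methodological difference to report; your write-up just makes explicit what the paper leaves implicit.
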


As we shall see later, these $3^n$ small triangles provide a useful family of covers for $\calg$.

We now formally define a finite matrix $B$, whose size depends on $m$ and whose entries depend on $\de$, as follows. Since $B$ acts as a weighted adjacency matrix, we first need a condition for adjacency.

\begin{definition}[Adjacency]
	Given $\un v = (1^k,2,v_{k+2},\ldots,v_{m+1}) \in \calv$, we write $\un v \mapsto_1 \un v'$, where $\un v' \in \calv \cup\{m\}$ is as follows:
	\begin{enumerate}
		\item[a)]
			If $k \leq m-2$, then $\un v' = (1^{k+1},2,v_{k+2}, \ldots, v_{m})$.
		\item[b)]
			If $k = m-1$, then  $\un v' = m$.
    \end{enumerate}
	Moreover, for $j = 2, 3$, we write $\un v \mapsto_j \un v'$, where $\un v' = (1,2^k,\ldots) \in \calv_1$ is equivalent to $(j,1^k,2,v_{k+2}, \ldots, v_{m})$.
\end{definition}

This definition describes the non-zero indices of the matrix $B$, as follows.

\begin{definition}[A finite matrix]
    Fix the value $\la := \frac{3}{2} - \frac{1}{\sqrt{3}} = 0.9226 \ldots$ and $\de>0$. Then, we can consider the square matrix $B$ indexed by $\calv \cup \{m\}$, defined by
		$$
				B_{\un v,\un w}
			=
				\begin{cases}
					\displaystyle
					\max_{
					    x \in \De_{\un w}
					    }
 						(2-x_j)^{-3\de - \la (1-\de)},
				&
					\hbox{ if } \un w \mapsto_j \un v;
				\\
							0, &\hbox{ otherwise}.
				\end{cases}
		$$
	(Note that $B_{\ostar, \un v} = B_{\un v,m} = 0$ for all $\un v\in \calv\cup \{m\}$.)
\end{definition}

\begin{definition}
To define the infinite matrix, we introduce the following values, for $k \in \NN$.
	$$
			a_k
		=
			\left(
				\frac{k+1}{2k+1}
			\right)^{3\de + \la (1-\de)}
 		+
 			\frac{1}{8^\de}
 			\left(
 				\frac{k+1}{2k+1}
 			\right)^{\la (1-\de)}
 		\qquad
 		\hbox{and}
 		\qquad
			b_k
   		=
   			\left(
   				\frac{k+2}{k+3}
   			\right)^{3\de + \la (1-\de)},
 	$$
\end{definition}

We now extend the finite matrix $B$ to give an infinite matrix.

\begin{definition}[An infinite matrix]
	Fixing $\de > 0$, we define the infinite matrix $D$ indexed by $\calv \cup\{m, m+1, \ldots\}$ as follows.%
	\footnote{Here, $N-1$ is the cardinality of $\calv$, and in the ordering of $\calv\cup\{m\}$, we take $\ostar$ to be first and $m$ last.}
		\begin{align*}
				D
		&=
			\begin{pmatrix}
                   B_{1,1}&\cdots&B_{1,{N-1}}&B_{1, N} + a_m&a_{m+1}&a_{m+2}& \cdots
				\\
					B_{2,1}&\cdots&B_{2, {N-1}}&B_{2,N}&0&0& \cdots
				\\
					\vdots&\ddots&\vdots&\vdots&\vdots &\vdots & \cdots
				\\
					B_{{N},1}&\cdots&B_{{N},{N-1}}&B_{N,N}&0&0 & \cdots
				\\
					0&\cdots&0&b_m&0&0& \cdots
				\\
					0&\cdots&0&0&b_{m+1}&0& \cdots
				\\
					0&\cdots&0&0&0&b_{m+2}& \cdots
				\\
					\vdots&\vdots&\vdots&\vdots&\vdots&\vdots& \ddots
			\end{pmatrix}
		\\
		&=
			\begin{pmatrix}
					0&\cdots&0&a_m&a_{m+1}&a_{m+2}& \cdots
				\\
					B_{2,1}&\cdots&B_{2, {N-1}} &0&0&0& \cdots
				\\
					\vdots&\ddots&\vdots&\vdots&\vdots &\vdots & \cdots
				\\
					B_{{N},1}&\cdots&B_{{N},{N-1}}&0&0&0 & \cdots
				\\
					0&\cdots&0&b_m&0&0& \cdots
				\\
					0&\cdots&0&0&b_{m+1}&0& \cdots
				\\
					0&\cdots&0&0&0&b_{m+2}& \cdots
				\\
					\vdots&\vdots&\vdots&\vdots&\vdots&\vdots& \ddots
			\end{pmatrix}.
	    \end{align*}
\end{definition}

Let $(D^k)_{i,j}$ be the entry corresponding to the $(i,j)$th entry of the $k$th power of $D$. Our main technical result is the following, which we will rephrase later in a more explicit way.

\begin{thm}\label{long}
    If $\de > \frac {1-\la}{3-\la} = 0.037\ldots$ satisfies $\sum_{k=1}^\infty (D^k)_{1,1}\leq 1$, then $\dim_H(\calg) \leq 1 + \de$.
\end{thm}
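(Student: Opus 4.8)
The plan is to build, for each $n$, an explicit cover of $\calg$ by small triangles $\De_{\un i}$, $\un i \in \{1,2,3\}^n$, and to show that the associated $(1+\de)$-dimensional Hausdorff sum $X_n := \sum_{\un i \in \{1,2,3\}^n} \diam(\De_{\un i})^{1+\de}$ tends to $0$; by definition of Hausdorff measure and dimension this forces $\dim_H(\calg) \le 1+\de$. The intermediate device is to replace $\diam(\De_{\un i})^{1+\de}$ by a comparable quantity of the form $\area(\De_{\un i})^{\de}\,\operatorname{(something)}^{1-\de}$, so that the elementary area/diameter estimates from section 3 apply: concretely, one uses $\diam(\De_{\un i})^{1+\de} \le C\,\diam(\De_{\un i})^{3\de}\,\diam(\De_{\un i})^{1-\de}$ and bounds $\diam$ above in terms of the contraction factors $(2-x_j)^{-1}$ coming from the maps $T_j$, with the exponent $\la = \tfrac32 - \tfrac1{\sqrt3}$ appearing as the exact rate governing how $\diam(\De_{\un i})$ decays along strings (this is where the hypothesis $\de > \frac{1-\la}{3-\la}$ enters, guaranteeing $3\de + \la(1-\de) > 1$ so that the relevant geometric series converge).

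Next I would organise the $3^n$ triangles according to the equivalence relation $\sim$ of Remark \ref{sim}: by symmetry under the dihedral group, the contribution of $\un i$ depends only on its $\sim$-class, and the leading length-$(m+1)$ pattern of $\un i$ lands in one of the classes indexed by $\calv \cup \{(1^m,2)\} \cup \{(1^{m+1})\}$. The cases $(1^k,2,\ldots)$ with $k$ growing, and the pure-$1$ tail, are precisely what the "infinite" rows/columns of $D$ — the $a_k$ and $b_k$ entries — are designed to absorb: a long run of $1$s at the front of a word produces the telescoping factors $\left(\frac{k+1}{2k+1}\right)^{\cdot}$ and $\left(\frac{k+2}{k+3}\right)^{\cdot}$ that define $a_k, b_k$. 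The transition rules $\un v \mapsto_j \un v'$ encode exactly how prepending a symbol $j \in \{1,2,3\}$ to a word changes its $\sim$-class (with a permutation applied when $j \ne 1$), and the weights $B_{\un v, \un w} = \max_{x \in \De_{\un w}}(2-x_j)^{-3\de - \la(1-\de)}$ are upper bounds for the corresponding multiplicative change in the estimate. Assembling these, one gets an inequality of the shape $X_n \le \sum_k (D^k)_{1,1} \cdot (\text{uniformly bounded remainder}) + (\text{negligible diagonal terms})$, i.e. $X_n$ is controlled by the partial sums of the first diagonal entry of powers of $D$.

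I would then invoke a renewal-type argument (section 6): the $(1,1)$ entries $(D^k)_{1,1}$ count weighted first-return loops at the distinguished state $\ostar$, and the hypothesis $\sum_{k=1}^\infty (D^k)_{1,1} \le 1$ says this return-generating series is at most $1$. Standard renewal theory then gives that the sequence feeding into $X_n$ is summable, hence its tail — which is what actually bounds $X_n$ for large $n$ after one peels off the first $m$ steps — goes to $0$. Combining with the geometric setup yields $X_n \to 0$ and therefore $\mathcal{H}^{1+\de}(\calg) = 0$, giving the claimed bound.

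The main obstacle I anticipate is not the renewal step but the bookkeeping in the middle: one must verify that the combinatorics of prepending symbols, modded out by the dihedral action, really is captured by the finite transition structure $\mapsto_j$ together with the two infinite "ladders" $(a_k)$ and $(b_k)$ — in particular that the maxima $\max_{x \in \De_{\un w}}(2-x_j)^{-\cdot}$ genuinely dominate the per-step distortion uniformly over the whole orbit, and that the $\De_{\un i}$ do cover $\calg$ efficiently enough that no super-polynomial loss is incurred when passing from $\diam^{1+\de}$ to the matrix weights. Controlling the error term (the "uniformly bounded remainder") so that it does not interfere with the convergence $X_n \to 0$ is the delicate point.
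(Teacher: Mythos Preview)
Your broad outline --- partition words by their leading pattern modulo the dihedral symmetry, derive recursive upper bounds encoded by the matrix $D$, and close with a renewal argument at the distinguished state $\ostar$ --- matches the paper's architecture, and your identification of where the threshold $\de > \frac{1-\la}{3-\la}$ enters (namely $3\de+\la(1-\de)>1$, ensuring summability of the tail $a_k\prod b_i$) is correct. But the very first step contains a genuine error.

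You set $X_n = \sum_{|\un i|=n} \diam(\De_{\un i})^{1+\de}$ and then write ``$\diam(\De_{\un i})^{1+\de} \le C\,\diam(\De_{\un i})^{3\de}\,\diam(\De_{\un i})^{1-\de}$'' to manufacture the exponent $3\de+\la(1-\de)$ that populates $D$. That inequality is just $\diam^{1+\de}\le C\,\diam^{1+2\de}$, which is \emph{false} for small $\diam$; and since $\area(\De_{\un i})\le \tfrac12\diam(\De_{\un i})^2$, one always has $\area^\de\diam^{1-\de}\lesssim\diam^{1+\de}$, never the reverse. If you stick with $\sum\diam^{1+\de}$ and use only Lemma~\ref{diameter}, the per-step factor is $(2-x_j)^{-\la(1+\de)}$, not $(2-x_j)^{-3\de-\la(1-\de)}$, so $D$ as defined is the wrong matrix and the hypothesis $\sum_k (D^k)_{1,1}\le 1$ says nothing about your $X_n$. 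The missing idea is Lemma~\ref{dimbound}: one does not cover $\calg$ by the triangles $\De_{\un i}$ themselves, but chops each thin triangle into roughly $\diam(\De_{\un i})^2/\area(\De_{\un i})$ disks of diameter $\sim\area(\De_{\un i})/\diam(\De_{\un i})$. The $(1+\de)$-sum over these disks is, up to a constant, $X_n:=\sum_{|\un i|=n}\area(\De_{\un i})^\de\diam(\De_{\un i})^{1-\de}$, and it is \emph{this} quantity whose one-step recursion (via Lemmas~\ref{area} and~\ref{diameter} combined) produces exactly the weight $(2-x_j)^{-3\de-\la(1-\de)}$. With $X_n$ so defined, the rest of your sketch goes through as in the paper.
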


In particular, letting $m=9$ and estimating numerically the value of $\de \approx 0.7415$ (rounding up to four decimal places) giving equality in the hypothesis of Theorem \ref{long} gives the bound in Theorem \ref{short} in the introduction as a corollary (see \S 7).

\section{Triangle estimates}

In this section we collect together elementary but useful estimates for the triangles $\De_{\underline i}$.

\begin{lemma}[Area estimate]\label{area}
    If $\underline i = (i_1, \ldots, i_n) \in \{1,2,3\}^n$ and $j \in \{1,2,3\}$
    then, writing  $j \underline i = (j, i_1, \ldots, i_n)$, we have that
        $$
                \frac
                    {\area(\De_{j \underline i})}
                    {\area(\De_{\underline i})}
            \leq
                \max_{x \in \De_{\underline i}}
                    (2-x_j)^{-3}.
        $$
\end{lemma}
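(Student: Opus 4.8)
The plan is to compute the Jacobian of the map $T_j$ explicitly and then relate the area of $\De_{j\un i} = T_j(\De_{\un i})$ to that of $\De_{\un i}$ by the change-of-variables formula. The maps $T_j$ are projectivisations of linear maps, so after lifting to homogeneous coordinates each $T_j$ is a rational map whose denominator is the linear form $2-x_j$; I expect the Jacobian determinant of $T_j$ at a point $x \in \De$ to be a constant multiple of $(2-x_j)^{-3}$ (the exponent $3$ coming from the ambient dimension of the homogeneous coordinates, with the restriction to the $2$-dimensional simplex absorbing the rest). Concretely, I would first parametrise $\De$ by two of the three coordinates, write $T_j$ in those coordinates, and differentiate.

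First I would fix $j$, say $j=1$, and write $T_1(x,y,z) = \left(\tfrac{1}{2-x}, \tfrac{y}{2-x}, \tfrac{z}{2-x}\right)$, using $(y,z)$ as coordinates on $\De$ (so $x = 1-y-z$). A direct computation of the $2\times 2$ Jacobian matrix of $(y,z) \mapsto \left(\tfrac{y}{2-x}, \tfrac{z}{2-x}\right)$, remembering that $x$ itself depends on $(y,z)$, should give $\det = c\,(2-x)^{-3}$ for an explicit constant $c$ (in fact $|c|=1$ after accounting for the geometry of $\De$, since $T_1$ is area-preserving on the full simplex only up to this weight). The cases $j=2,3$ follow by the symmetry of the maps under permuting coordinates. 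Then, by the change-of-variables formula,
$$
\area(\De_{j\un i}) = \area\big(T_j(\De_{\un i})\big) = \int_{\De_{\un i}} |\Jac T_j(x)|\, d(\Leb|_\De)(x) = \int_{\De_{\un i}} c\,(2-x_j)^{-3}\, d(\Leb|_\De)(x).
$$
Bounding the integrand pointwise by its maximum over $\De_{\un i}$ and normalising the constant $c$ appropriately yields
$$
\area(\De_{j\un i}) \leq \Big(\max_{x\in\De_{\un i}}(2-x_j)^{-3}\Big)\,\area(\De_{\un i}),
$$
which is the claimed inequality.

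The only genuine subtlety is getting the constant $c$ right — that is, checking that the ratio $\area(\De_{j\un i})/\area(\De_{\un i})$ really is controlled by $(2-x_j)^{-3}$ with no spurious prefactor, which amounts to verifying that $T_j$ acts on the reference simplex $\De$ with unit-normalised Jacobian weight $(2-x_j)^{-3}$. I would handle this by computing the Jacobian directly in affine coordinates as above rather than invoking projective area forms, which keeps the bookkeeping elementary and transparent; since $2-x_j \geq 1$ on $\De$, the weight is at most $1$, consistent with the $T_j$ being contractions onto subsimplices. Everything else is a routine application of the change-of-variables formula and the monotonicity of the integral.
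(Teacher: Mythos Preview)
Your proposal is correct and follows essentially the same approach as the paper: apply the change-of-variables formula and compute the Jacobian of $T_j$ explicitly to find that it equals $(2-x_j)^{-3}$ (your worry about the constant $c$ is unfounded---in your affine $(y,z)$ coordinates one gets $c=1$ on the nose, since $2-x=1+y+z$ and the determinant simplifies to $(1+y+z)^{-3}$). The only difference is that the paper computes $DT_j$ with respect to an \emph{orthonormal} basis of the tangent plane to $\De$ rather than your affine coordinates, a choice made because the resulting matrix form~\eqref{eq-derivative as matrix} is reused in the subsequent diameter estimate (Lemma~\ref{diameter}), where the Euclidean operator norm is needed.
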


\begin{proof}
    By a change of variables,
        $$
                \area (\De_{j\underline i})
            =
                \area (T_j\De_{\underline i})
            =
                \int_{\De_{\underline i}}
                    \Jac T_j(x)
                \;\mathrm dx
            \leq
                \max_{x \in \De_{\underline i}}
                    \big( \Jac {T}_j(x) \big)
                \area (\De_{\underline i}).
        $$
    To complete the proof, we now show that $\Jac T_j (x) = (2-x_j)^{-3}$. If $j=1$, with respect to the orthogonal basis
        $( \frac1{\sqrt2}(\frac\partial{\partial x_2}-\frac\partial{\partial x_3}),\frac1{\sqrt6}(2\frac\partial{\partial x_1}-\frac\partial{\partial x_2} -\frac\partial{\partial x_3}))$,
    the derivative map $DT_j(x)$ takes the following form:
        \begin{equation}
                DT_1(x)
            =
                \frac 1 {(2-x_1)^2}
                \left(
                    \begin{array}{cc}
                        1 & 0 \\
                        \frac1{\sqrt{3}}(
                        x_2-x_3)
                         & 2-x_1 \\
                    \end{array}
                \right).
        \label{eq-derivative as matrix}
        \end{equation}
    Thus, $\Jac T_1 (x) = (2-x_1)^{-3}$. The other cases ($j\neq 1$) follow by symmetry.
\end{proof}

\begin{rem}
    In fact, one can deduce a simple formula for $\area(\De_{\un i})$ from the precise form of the above jacobean, using matrix products (see \cite{thesis}).
\end{rem}

We can apply a similar reasoning to the associated diameters.

\begin{lemma}[Diameter estimate]\label{diameter}
If $\un i = (i_1, \ldots, i_n) \in \{1,2,3\}^n$ and $j \in \{1,2,3\}$ then
    $$
                \frac
                {
                    \diam(\De_{j \un i})
                }
                {
                    \diam(\De_{\un i})
                }
        \leq
            \max_{ x \in \De_{\un i}}
                (2-x_j)^{-\la}.
    $$
\end{lemma}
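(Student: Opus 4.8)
The plan is to mirror the proof of the area estimate (Lemma \ref{area}), replacing the Jacobian bound by a bound on the operator norm of the derivative $DT_j$. Specifically, I would first observe that for any smooth injective map $T$ on a convex region, $\diam(T(\Omega)) \leq \left(\sup_{x \in \Omega} \|DT(x)\|\right) \diam(\Omega)$, where $\|\cdot\|$ is the operator norm with respect to the Euclidean metric on the simplex (inherited from $\RR^3$); this is immediate from the mean value inequality along the segment joining two points of the convex set $\De_{\un i}$, together with the fact that $T_j$ maps $\De_{\un i}$ into the convex set $\De_{j\un i}$, so the straight segment between images stays inside and the chord is no longer than the image of the segment. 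Applying this with $T = T_j$ and $\Omega = \De_{\un i}$ reduces the claim to showing $\|DT_j(x)\| \leq (2-x_j)^{-\la}$ for all $x \in \De$ (in particular for $x \in \De_{\un i}$).

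For the norm bound I would use the explicit matrix form of $DT_1(x)$ from \eqref{eq-derivative as matrix} in the orthonormal basis introduced there, and the symmetric analogues for $j=2,3$. Writing $t = 2 - x_j \in [1,2]$ (since $x_j \in [0,1]$) and $s = \frac{1}{\sqrt3}(x_2-x_3)$ for $j=1$, the matrix is $\frac{1}{t^2}\begin{pmatrix} 1 & 0 \\ s & t \end{pmatrix}$, and I need to bound its largest singular value. The largest singular value squared is the largest eigenvalue of $\frac{1}{t^4}\begin{pmatrix} 1+s^2 & st \\ st & t^2\end{pmatrix}$, which by a routine computation is $\frac{1}{2t^4}\left(1 + s^2 + t^2 + \sqrt{(1+s^2+t^2)^2 - 4t^2}\right)$. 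One then needs $\|DT_1(x)\|^2 \leq t^{-2\la}$, i.e. $t^{4-2\la}$ times the above eigenvalue is at most $1$. The quantity $s$ is constrained: since $x_1, x_2, x_3 \geq 0$ and $x_1 = 2 - t$, we have $x_2 + x_3 = t - 1$ and $|x_2 - x_3| \leq t-1$, so $s^2 \leq \frac{(t-1)^2}{3}$. Plugging in the worst case $s^2 = (t-1)^2/3$ and checking the resulting single-variable inequality in $t \in [1,2]$ should yield the bound, with the exponent $\la = \frac32 - \frac1{\sqrt3}$ being (presumably) exactly the value making this tight; I would verify the endpoint $t=1$ gives norm $1$ and $\la$ arbitrary, and locate where the constraint binds.

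The main obstacle I anticipate is precisely this last single-variable estimate: confirming that $\la = \frac32 - \frac{1}{\sqrt3}$ is the correct (smallest admissible) exponent and that the inequality $t^{4-2\la} \cdot (\text{largest eigenvalue}) \leq 1$ holds uniformly on $[1,2]$ under the constraint $s^2 \leq (t-1)^2/3$. This is elementary calculus — differentiate, find critical points, check endpoints — but it is the step where the specific constant $\la$ is pinned down, so care is needed. A cleaner route, which I would try first, is to bound the operator norm by something slightly larger but more tractable, e.g. $\|DT_1(x)\| \leq \frac{1}{t^2}\left\|\begin{pmatrix}1&0\\s&t\end{pmatrix}\right\|$ and then use a crude bound on the $2\times 2$ norm (such as $\|M\| \leq \sqrt{\|M\|_1 \|M\|_\infty}$ or a direct estimate of the larger singular value via $\tfrac12(\sqrt{(a+b)^2} + \sqrt{(a-b)^2})$-type inequalities), reducing everything to an inequality clean enough to check by hand; the paper's phrasing ``similar reasoning'' and ``completely elementary methods'' suggests the authors indeed do something of this flavour. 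Finally, the cases $j = 2, 3$ follow verbatim by the symmetry of the maps $T_1, T_2, T_3$ under permutation of coordinates, exactly as in Lemma \ref{area}.
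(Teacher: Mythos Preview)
Your proposal is correct and is essentially the paper's proof: reduce via the mean value inequality to the pointwise bound $\|DT_1(x)\|\le(2-x_1)^{-\la}$, compute the operator norm exactly as you do (the paper's $a,b$ are your $s^2,t^2$), impose the constraint $s^2\le(t-1)^2/3$, and handle $j\ne1$ by symmetry. Two small caveats: the residual one-variable inequality is less routine than ``differentiate, find critical points'' --- the paper devotes an appendix to it, bounding $\alpha(t)=\sqrt{t^2-5t+7}$ above and below by its degree-$2$ and degree-$3$ Taylor polynomials at $t=1$ to reduce to a polynomial inequality --- and your proposed shortcut via crude matrix-norm inequalities (e.g.\ $\|M\|\le\sqrt{\|M\|_1\|M\|_\infty}$) would not recover the sharp exponent $\la=\tfrac32-\tfrac1{\sqrt3}$, which is exactly the limiting value of $\log\|DT_1(x)\|/\log(2-x_1)$ as $x\to(1,0,0)$.
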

\color{black}
\begin{rem}
	The constant $\la$ used as the 
	 exponent in Lemma \ref{diameter} was suggested by noticing empirically that the contours of the operator norm $\|DT_1(x)\|$ of $DT_1(x)$ were very similar to those of its main eigenvalue, $(2-x_1)^{-1}$. Hence it was natural to consider the ratio
		$$
				\frac
					{\log\|DT_1(x)\|}
					{\log(2-x_1)},
		$$
	whose supremal value is obtained in the limit as $x \to (1,0,0)$, and may be computed using L'H\^opital's rule.
\end{rem}
\color{black}
\begin{proof}
    Consider the operator norm, $\|DT_j(x)\|$, of the derivatives $T_j(x)$ (i.e., with respect to the ambient metric in $\RR^3$). Considering $j=1$, the matrix in \eqref{eq-derivative as matrix} satisfies
        $$
                \big\|DT_1(x)\big\|^2
            =
                \frac
                    1
                    {2b^2}
                \left(
                {
                        1 + a + b
                    +
                        \sqrt{
                            (1+ a + b)^2
                        -
                            4b
                        }
                }
                \right)
                ,
        $$
    where $a = \frac13(x_2-x_3)^2$ and $b = (2-x_1)^2$. Using the simple bound $a \leq \frac13 (1-x_1)^2$, one has that, for all $x \in \De$,
        $$
                \big\|DT_1(x)\big\|^2
            \leq
                f(x_1)
            :=
                \frac{
                            2x_1^2 - 7x_1 + 8 +2(1-x_1)
                            \sqrt{
                                 x_1^2-5x_1+7
                            }
                    }
                    {
                        3
                        (2-x_1)^4
                    }.
        $$

    We claim that $g:t\mapsto f(t)(2-t)^{2\la}$ is increasing on $[0,1]$, and hence has maximum $g(1) = 1$ (we defer the proof of this statement to the appendix), i.e., $\|DT_1(x)\| \leq \sqrt{f(x_1)} \leq (2-x_1)^{-\la}$. The result for $j=1$ follows simply from the mean value theorem and for $j\neq1$ by symmetry.
\end{proof}
\color{black}
\begin{rem}
	In the context of higher-dimensional Rauzy gaskets (see \cite{arnoux-starosta} for the definition), an effective version of Lemma \ref{diameter} certainly applies, since the derivative maps there have a very similar structure to that observed above.
\end{rem}
\color{black}

In view of the previous lemmas, it is useful to have more explicit bounds involving $(2-x_j)^{-1}$ depending on where the point $x \in \De$ approximately lies. We henceforth denote
    $$
            A_{n,k}
        =
            \{
                \un i \in \{1,2,3\}^n
            \;:\;
                i_1 = \cdots = i_k \neq i_{k+1}
            \}
            \qquad(1\leq k < n)
    $$
and work with the following simple bounds.

\begin{lemma}
    If $\un i \in A_{n,k}$ and $i_1=i,j,l \in \{1,2,3\}$ are distinct
    then, for any $\de>0$,
        \begin{gather*}
                \max_{x \in  \De_{\underline i}}
                    (2-x_i)^{-1}
            \leq
                    \frac
                        {k+2}
                        {k+3}
                ,
           \\
           \phantom{
            \qquad
                \hbox{and}}
                \max_{x \in  \De_{\underline i}}
                    (2-x_j)^{-1}
            \leq
                    \frac
                        {k+1}
                        {2k+1}
                ,
            \qquad
                \hbox{and}
            \\
                \max_{x \in  \De_{\underline i}}
                    \left\{
                        (2-x_j)^{-3\de}
                    +
                        (2-x_l)^{-3\de}
                    \right\}
\leq   \left( \frac{k+1}{2k+1}\right)^{3\de} + \frac 1 {8^\de}.
    \end{gather*}
\end{lemma}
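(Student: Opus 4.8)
The plan is to replace the small triangle $\De_{\un i}$ by one or two larger, completely explicit triangles, and then to maximise the relevant monotone or convex functions over those. Since conjugating by a permutation of the three coordinates permutes $T_1,T_2,T_3$ among themselves and fixes $\De$, one may assume at the outset that $i=i_1=1$, so that $\{j,l\}=\{2,3\}$; all three claimed inequalities are invariant under this reduction. Because $1\le k<n$, the word $\un i$ has a $(k{+}1)$-st letter, which by the definition of $A_{n,k}$ is some $j_0\in\{2,3\}$; writing $T_{\un i}=T_1^{k}\circ T_{j_0}\circ T_{\un i''}$ for a (possibly empty) word $\un i''$ and using $T_{\un i''}(\De)\subseteq\De$ yields the inclusions
$$
\De_{\un i}\ \subseteq\ T_1^{k}\big(T_{j_0}(\De)\big)\ \subseteq\ T_1^{k}(\De).
$$
Recall that each of these sets is a triangle, whose three vertices (being projective images of $\De$) are the images under the relevant map of $(1,0,0)$, $(0,1,0)$ and $(0,0,1)$.

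First I would identify those vertices. A one-line induction on $k$ — using that $(1,0,0)$ is the fixed point of $T_1$ and that $T_1\big(\tfrac{k}{k+1},\tfrac1{k+1},0\big)=\big(\tfrac{k+1}{k+2},\tfrac1{k+2},0\big)$, and symmetrically in the third coordinate — shows that $T_1^{k}(\De)$ is the triangle with vertices $(1,0,0)$, $\big(\tfrac{k}{k+1},\tfrac1{k+1},0\big)$ and $\big(\tfrac{k}{k+1},0,\tfrac1{k+1}\big)$. Applying $T_{j_0}$ first to the vertices of $\De$ and then iterating $T_1$ gives, by the same kind of short computation, the three vertices of $T_1^{k}(T_{j_0}(\De))$: the one arising from $T_{j_0}(1,0,0)$ is $\big(\tfrac{k+1}{k+2},\tfrac1{k+2},0\big)$ (with the last two coordinates swapped if $j_0=3$), while the other two have first coordinate $\tfrac{k}{k+1}$.

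The first two inequalities are then immediate, since $t\mapsto(2-t)^{-1}$ is increasing and $x\mapsto x_r$ is linear, so each maximum is attained at a vertex: reading off first coordinates, $\max_{\De_{\un i}}x_1\le\tfrac{k+1}{k+2}$, giving $\max(2-x_1)^{-1}\le\tfrac{k+2}{k+3}$; and $\max_{\De_{\un i}}x_j\le\max_{T_1^k(\De)}x_2=\tfrac1{k+1}$, giving $\max(2-x_j)^{-1}\le\tfrac{k+1}{2k+1}$, the case of $x_l$ being identical by the symmetry of $T_1^k(\De)$ in its last two coordinates. For the third inequality the point is that $\psi(t):=(2-t)^{-3\de}$ has $\psi''(t)=3\de(3\de+1)(2-t)^{-3\de-2}>0$ for $\de>0$, so $x\mapsto(2-x_j)^{-3\de}+(2-x_l)^{-3\de}=\psi(x_j)+\psi(x_l)$ is convex on $T_1^k(\De)$ and hence maximised at one of its three vertices. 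Evaluating there gives $2\cdot 8^{-\de}$ at $(1,0,0)$ and $\big(\tfrac{k+1}{2k+1}\big)^{3\de}+8^{-\de}$ at each of the other two; since $\tfrac12\le\tfrac{k+1}{2k+1}$ and $t\mapsto t^{3\de}$ is increasing, $8^{-\de}=2^{-3\de}\le\big(\tfrac{k+1}{2k+1}\big)^{3\de}$, so the second value is the larger — which is exactly the stated bound.

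This lemma is light, and essentially the only thing to get right is which enclosing triangle to use for each estimate: the first bound genuinely needs the $(k{+}1)$-st letter of $\un i$ (over $T_1^k(\De)$ alone one only gets the trivial $(2-x_1)^{-1}\le 1$), whereas the last two bounds are statements purely about $T_1^k(\De)$. The remaining work is a routine vertex computation, which I would compress into a couple of lines; the only places where one must pay attention are the monotonicity/convexity claims that justify passing from the triangle to its vertices.
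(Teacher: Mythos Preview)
Your argument is correct and follows essentially the same route as the paper's proof: the symmetry reduction, the containment of $\De_{\un i}$ in an explicit region around $T_1^k(\De)$, and the convexity argument for the third bound at the vertices of $T_1^k(\De)$ are all identical. The only cosmetic difference is that for the first bound the paper encloses $\De_{\un i}$ in the trapezoid $\mathrm{cl}\big(T_1^k(\De)\setminus T_1^{k+1}(\De)\big)=\{x\in\De:\tfrac{k}{k+1}\le x_1\le\tfrac{k+1}{k+2}\}$ (using the half-space description $T_1^k(\De)=\{x_1\ge\tfrac{k}{k+1}\}$), whereas you use the triangle $T_1^k(T_{j_0}(\De))$ and read off its vertices; both yield the same upper bound $x_1\le\tfrac{k+1}{k+2}$.
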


\begin{proof}
    By symmetry, it suffices to consider $\underline i = (i_1, \ldots, i_n)$ with $i_1 = \cdots = i_{k} = 1$ and $i_{k+1} \neq 1$. Then $\De_{\un i}$ is contained in
        \begin{equation}
        \label{R_k}
                \text{cl}\big(
                    T_1^k(\De) \setminus T_1^{k+1}(\De)
                \big)
            =
                \bigg\{
                    x \in \De
                    \;:\;
                        \frac
                            {k}
                            {k+1}
                    \leq
                        x_1
                    \leq
                        \frac
                            {k+1}
                            {k+2}
                \bigg\}
        \end{equation}
    (cl denoting the topological closure), where we have used that $T_1^k(\De) = \{x \in \De\,:\,x_1\geq \frac{k}{k+1}\}$. The first two bounds follow directly:
        $$
                (2-x_1)^{-1}
            \leq
                \left(
                    2-\frac{k+1}{k+2}
                \right)^{-1}
            =
                    \frac{k+2}{k+3}
            \qquad\text{and}
            \qquad
                 (2-x_j)^{-1}
            \leq
                \left(
                    2-\frac{1}{k+1}
                \right)^{-1}
            =
                    \frac{k+1}{2k+1}.
        $$

    For the third bound, we observe that the function $f(x) := (2-x_2)^{-3\de} + (2-x_3)^{-3\de}$ is convex on $\De$ and thus takes its maximum on $T_1^k(\De)$ at one of its vertices:
        $$
                \max_{\De_{\un i}}
                    f
            \leq
                \max_{T_1^k(\De)}
                    f
            =
                \max
                    \left\{
                        f\circ T_1^k(1,0,0),
                        f\circ T_1^k(0,1,0),
                        f\circ T_1^k(0,0,1)
                    \right\}.
        $$
    More explicitly, noting that $f\circ T_1^k(1,0,0) = f(1,0,0) \leq f\circ T_1^k(0,1,0) = f\left( \frac k {k+1}, \frac 1 {k+1},0 \right) = f\left(\frac k {k+1},0, \frac 1 {k+1} \right) =  f\circ T_1^k(0,0,1)$, one has
        $$
                \max_{\De_{\un i}}f
            \leq
                    \left(
                        2-\frac{1}{k+1}
                    \right)^{-3\de}
                +
                    2^{-3\de}
            =
                \left(\frac{k+1}{2k+1}\right)^{3\de} + \frac 1 {8^\de},
        $$
    as required.
\end{proof}

\section{Cover  estimates}

The upper bound on the dimension in Theorem \ref{long} is based on finding a value $\de \in (0,1)$ so that Lemma 4.1 below applies.

Its proof is simple and based on a simple sequence of open covers of $\calg$, $\mathcal U_n$, each obtained by covering the set of $n$th level triangles, $\{\De_{\un i} \hbox{ : } |\un i| = n\}$.

\begin{lemma}\label{dimbound}
    Assume $\de>0$ and that the sequence
        $$
                X_n
            :=
                \sum_{|\un i| = n}
                    \area(\De_{\un i})^{\de}
                    \diam(\De_{\un i})^{1-\de}
            \to
                0
            \qquad\hbox{as}
            \qquad
                n \to \infty.
        $$
    Then $\dim_H(\calg) \leq 1 + \de.$
\end{lemma}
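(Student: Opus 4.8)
The plan is to construct, for every $n$, an explicit finite cover $\mathcal U_n$ of $\calg$ by small squares whose $(1+\de)$-dimensional sum $\sum_{U\in\mathcal U_n}(\diam U)^{1+\de}$ is controlled by $X_n$ and whose mesh tends to $0$; letting $n\to\infty$ then yields $\mathcal H^{1+\de}(\calg)=0$, and hence $\dim_H(\calg)\le 1+\de$. The first, essentially formal, observation is that the level-$n$ triangles already cover the gasket: iterating $\calg=\bigcup_{j=1}^{3}T_j(\calg)$ gives $\calg=\bigcup_{|\un i|=n}T_{\un i}(\calg)$, and since $\calg\subseteq\De$ this is contained in $\bigcup_{|\un i|=n}T_{\un i}(\De)=\bigcup_{|\un i|=n}\De_{\un i}$. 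So it is enough to cover each triangle $\De_{\un i}$ efficiently and take the union over $|\un i|=n$.

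The geometric core is covering a single triangle $\De_{\un i}$, with $d:=\diam(\De_{\un i})$ and $A:=\area(\De_{\un i})$. For a planar triangle the diameter is the longest side, and the altitude $w$ onto that side equals $2A/d$; moreover $w\le d$, since the altitude onto the longest side is at most either of the other two sides. In particular $d^2\ge 2A$, so $d/w\ge 1$, and $\De_{\un i}$ lies in a rectangle of dimensions at most $2d\times w$ (it has diameter $d$ and sits in the width-$w$ strip determined by its longest side). This rectangle is covered by at most $2d/w+1\le 4d/w$ axis-parallel squares of side $w$, each of diameter $\sqrt 2\,w$, so
$$
\sum_{\text{squares}}(\diam)^{1+\de}
\;\le\;
\frac{4d}{w}\big(\sqrt 2\,w\big)^{1+\de}
\;=\;
2^{(5+\de)/2}\,d\,w^{\de}
\;=\;
2^{(5+\de)/2}\,d\Big(\frac{2A}{d}\Big)^{\de}
\;=\;
C(\de)\,A^{\de}\,d^{1-\de},
$$
with $C(\de):=2^{(5+3\de)/2}\le 16$ for $\de\le 1$. (Openness of the covering sets is immaterial for $\mathcal H^{1+\de}$ — or one slightly enlarges each square — and any degenerate $\De_{\un i}$ contributes nothing since $1+\de>1$.) Taking $\mathcal U_n$ to be the union, over all $|\un i|=n$, of these square covers, we obtain a cover of $\calg$ with $\sum_{U\in\mathcal U_n}(\diam U)^{1+\de}\le C(\de)\sum_{|\un i|=n}\area(\De_{\un i})^{\de}\diam(\De_{\un i})^{1-\de}=C(\de)\,X_n$.

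It remains to see that $\mathcal U_n$ has small mesh, $\epsilon_n:=\max_{U\in\mathcal U_n}\diam U=\sqrt 2\max_{|\un i|=n}w_{\un i}\to 0$; this is the only point requiring care, because a priori we have no uniform estimate $\max_{|\un i|=n}\diam(\De_{\un i})\to 0$. It follows from the hypothesis itself: from $A=\tfrac12 wd$ and $d\ge w$ we get $2^{\de}A^{\de}d^{1-\de}=w^{\de}d\ge w^{1+\de}$, so $w_{\un i}^{1+\de}\le 2^{\de}\area(\De_{\un i})^{\de}\diam(\De_{\un i})^{1-\de}\le 2^{\de}X_n$ for every $|\un i|=n$, whence $\epsilon_n\le\sqrt 2\,(2^{\de}X_n)^{1/(1+\de)}\to 0$. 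Therefore $\mathcal H^{1+\de}_{\epsilon_n}(\calg)\le C(\de)\,X_n$ with $\epsilon_n\to0$, so $\mathcal H^{1+\de}(\calg)=\lim_n\mathcal H^{1+\de}_{\epsilon_n}(\calg)\le\limsup_n C(\de)\,X_n=0$, and consequently $\dim_H(\calg)\le 1+\de$. Thus the only obstacle beyond routine area/diameter bookkeeping is the mesh bound, which — pleasantly — is supplied for free by $X_n\to0$ and the elementary inequality $d^2\ge 2A$ for triangles, so that no separate contraction property of $T_1,T_2,T_3$ need be invoked.
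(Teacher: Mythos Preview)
Your proof is correct and follows essentially the same route as the paper's: enclose each $\De_{\un i}$ in a thin rectangle based on its longest side, tile that rectangle by roughly $\diam(\De_{\un i})^2/\area(\De_{\un i})$ small sets of diameter comparable to $2\,\area(\De_{\un i})/\diam(\De_{\un i})$, and sum to obtain a constant times $X_n$. The paper uses disks where you use squares and simply appeals to ``the standard definition'' without checking that the mesh of $\mathcal U_n$ tends to zero, whereas you extract this explicitly from $X_n\to 0$ via $w_{\un i}^{\,1+\de}\le 2^{\de}\,\area(\De_{\un i})^{\de}\diam(\De_{\un i})^{1-\de}$; otherwise the arguments coincide.
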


		\begin{figure}[hbt]
			\begin{tikzpicture}

			\foreach \x/\y in {5/2}
			{
				\draw[fill = yellow] (0,0) -- (0,\y)
						node[midway,anchor = east]{$\displaystyle  \frac{2\area(\De_{\un i})}{{\diam(\De_{\un i})}}$}
					-- (\x,\y) -- (\x,0);

				\draw[thick, fill = white] (0,0) -- (\x/5,\y) -- (\x,0) -- (0,0) node[midway,anchor=north]{$\diam(\De_{\un i})$};

				\draw[dotted] (\x+2,0.5*\y) -- (\x+2, 0.5*\y);

				\draw[fill = black, opacity = 0.3] (\x+1.5,0) -- (\x+1.5,\y) -- (2*\x+1.5,\y) -- (2*\x+1.5,0) -- (\x+1.5,0);

				\draw[fill = yellow] (\x+1.5,0) -- (\x+1.5,\y) -- (2*\x+1.5,\y) -- (2*\x+1.5,0) -- (\x+1.5,0);

				\foreach \n in {0,1,2,...,4}{

					\draw[fill=black, opacity = 0.1] (\x+2+\n,0.5*\y) circle (1.41423);
				}

				\draw(\x+1.5,0) -- (\x+1.5,\y) -- (2*\x+1.5,\y) -- (2*\x+1.5,0) node[midway,anchor = west]{$a$} -- (\x+1.5,0) node[midway, anchor = north]{$b$};

				\foreach \n in {0,1,2,...,4}{
					\draw[fill=black] (\x+2+\n,0.5*\y) circle (0.05);
				}
			}

			\draw (4/3,1) node {$\De_{\un i}$};

			\end{tikzpicture}
		\caption[Covering a triangle by disks]{Left: covering $\De_{\un i}$ by a rectangle. Right: covering a rectangle by open disks.}
		\label{figr-cover rectangles}
		\end{figure}
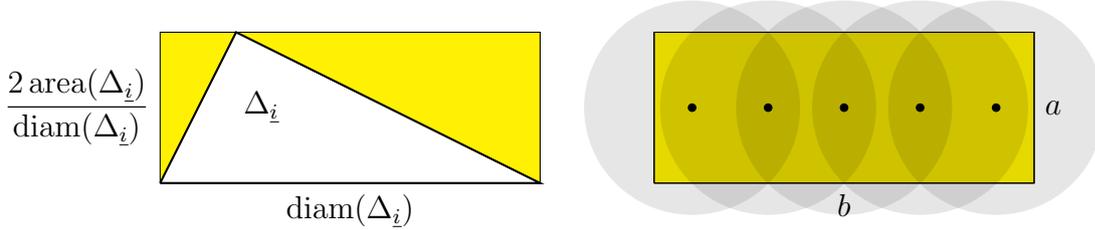

\begin{proof}
    As illustrated in Figure 2, each triangle $\De_{\un i}$ is contained in a rectangle with side lengths $b = \diam(\De_{\un i})$ and $a = 2  \area(\De_{\un i})/ \diam(\De_{\un i})$, which  can in turn be covered by $[2b/a]$ disks of diameter $2a$.  That is, $\De_{\un i}$ can be covered by $\diam(\De_{\un i})^2/ \area(\De_{\un i})$ disks of diameter $4\area(\De_{\un i})/ \diam(\De_{\un i})$.
    Denoting by $\mathcal U_n$ the union of the disks covering all of the triangles $\De_{\un i}$, this gives
        $$
                \sum_{U \in \mathcal U_n}
                    \diam(U)^{1+\de}
            \leq
                4^{1+\de}
                    \sum_{|\un i|=n}
                \left(
                    \frac
                        { \diam(\De_{\underline i})^2}
                        { \area(\De_{\underline i})}
                \right)
                \left(
                    \frac{
                        \area(\De_{\underline i})}
                        {\diam(\De_{\underline i})}
                \right)^{1+\de}
            =
                4^{1+\de} X_n.
        $$
    The result follows from the standard definition of the Hausdorff dimension (see, e.g., \cite{falconer}).
\end{proof}

\begin{rem}
    Perhaps surprisingly, this lemma appears to give a significant improvement on working exclusively with either $\diam(\De_{\underline i})$  or $\area(\De_{\underline i})$ (as in \cite{fou} and \cite{ar}).
\end{rem}

\smallskip
To apply Lemma \ref{dimbound} we want to bound $X_n$ from above using a partition of
$\{1,2,3\}^n$.
We then apply different bounds to the triangles $T_j(\De_{\un i})$ according to which element of the partition the index $\un i$ lies in.

\begin{definition}[Partitioning up the sequences]
    For each $n > m$, we can partition
        $$
                \{1,2,3\}^n
            :=
                \bigcup_{\un v \in \calv}
                    A_{n,\un v}
            \cup
                \bigcup_{k=m}^{n-1}
                    A_{n,k}
            \cup
                \bigcup_{j=1}^3
                     \{(j^n)\},
        $$
    where $A_{n,k}$ is as above and $A_{n,\un v} = \{\un i \in \{1,2,3\}^n\;:\; (i_1,\ldots, i_{m+1}) \sim \underline v\}$, recalling the equivalence relation $\sim$ on page 3. 

\end{definition}

This partition naturally gives the following components of $X_n$, for $n>m$.

\begin{definition}
    For $n > m$ and any $\alf \in \calv \cup \{m,\dots,n-1\}$, we write
        $$
                X_{n,\alf}
            :=
                \sum_{\un i \in A_{n,\alf}}
                    \area(\De_{\un i})^\de
                    \diam(\De_{\un i})^{1-\de}.
        $$
\end{definition}

We now derive bounds on $X_{n+1}$ using estimates on $X_{n,\un v}$ and $ X_{n,k}$.

\section[Inductive bounds]{Inductive bounds on $X_{n,\un v}$ and  $X_{n,k}$}

We now turn to the problem of showing that $X_n \to 0$ as $n \to \infty$, which will enable us to apply Lemma \ref{dimbound}.
Our method, broadly speaking, is based on obtaining inductive bounds bounding

	\begin{enumerate}
 		\item
			$X_{n+1,k+1}$ in terms of $X_{n,k}$, and
		\item
 			$X_{n+1,\un v}$ in terms of $X_{n,\un w}$ and $X_{n,k}$.
	\end{enumerate}
These will prove useful in applying the renewal theorem in the next section.

\subsection{Bounds of terms indexed by numbers}


The next simple lemma is independent of $m$, and features the constants $b_k$.

\begin{lemma}[Number Lemma]\label{number}
    Suppose $\de \in (0,1)$. Then, for all $n > k$,
         \begin{gather*}
                X_{n+1,k+1}
            \leq
                b_k X_{n,k}
            .
        \end{gather*}
\end{lemma}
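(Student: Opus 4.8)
The plan is to prove the Number Lemma by relating the $(n+1)$st-level triangles indexed by $A_{n+1,k+1}$ directly to the $n$th-level triangles indexed by $A_{n,k}$, exploiting that prepending a single symbol $j$ to a string in $A_{n,k}$ with $i_1=j$ produces a string in $A_{n+1,k+1}$, and that this correspondence is a bijection. Concretely, if $\un i=(i_1,\dots,i_n)\in A_{n,k}$ and $j=i_1$, then $j\un i=(j,i_1,\dots,i_n)\in A_{n+1,k+1}$, since the leading block of equal symbols grows by exactly one; conversely every element of $A_{n+1,k+1}$ arises this way by deleting its first symbol. So the sum defining $X_{n+1,k+1}$ can be reindexed as a sum over $\un i\in A_{n,k}$ of $\area(\De_{i_1\un i})^\de\diam(\De_{i_1\un i})^{1-\de}$.

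The key step is then to bound, for $\un i\in A_{n,k}$ with $i_1=i$, the ratio $\area(\De_{i\un i})^\de\diam(\De_{i\un i})^{1-\de}\big/\area(\De_{\un i})^\de\diam(\De_{\un i})^{1-\de}$. By Lemma~\ref{area} (Area estimate) and Lemma~\ref{diameter} (Diameter estimate), this is at most
$$
\left(\max_{x\in\De_{\un i}}(2-x_i)^{-3}\right)^{\!\de}\left(\max_{x\in\De_{\un i}}(2-x_i)^{-\la}\right)^{\!1-\de}
=\max_{x\in\De_{\un i}}(2-x_i)^{-3\de-\la(1-\de)}.
$$
Now, since $\un i\in A_{n,k}$, we have $i_1=\cdots=i_k=i\neq i_{k+1}$, so the first bound of Lemma~4.4 (the one on $(2-x_i)^{-1}$, where $i$ plays the role of the repeated index) gives $\max_{x\in\De_{\un i}}(2-x_i)^{-1}\leq\frac{k+2}{k+3}$. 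Raising to the positive power $3\de+\la(1-\de)$ yields exactly $b_k=\left(\frac{k+2}{k+3}\right)^{3\de+\la(1-\de)}$. Summing over $\un i\in A_{n,k}$ then gives $X_{n+1,k+1}\leq b_k X_{n,k}$.

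I expect the main (minor) obstacle to be bookkeeping the index conventions: verifying that prepending $i_1$ to $\un i\in A_{n,k}$ lands in $A_{n+1,k+1}$ and that the map $\un i\mapsto i_1\un i$ is a bijection onto $A_{n+1,k+1}$ (one must note $k+1<n+1$, so the defining inequality $i_{k+2}\neq i_{k+1}$ is vacuous or inherited appropriately, and that the constraint on $A_{n+1,k+1}$ only involves the first $k+2$ coordinates, all determined by $\un i$ and the repeated first symbol). Everything else is a direct application of the three preceding lemmas together with monotonicity of $t\mapsto t^{3\de+\la(1-\de)}$ on $[0,\infty)$, valid because $3\de+\la(1-\de)>0$ for $\de\in(0,1)$.
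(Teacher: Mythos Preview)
Your proof is correct and follows essentially the same approach as the paper: establish the bijection $A_{n,k}\to A_{n+1,k+1}$ via $\un i\mapsto i_1\un i$, then apply the area and diameter estimates (Lemmas~\ref{area} and~\ref{diameter}) together with the bound $\max_{x\in\De_{\un i}}(2-x_{i_1})^{-1}\leq\frac{k+2}{k+3}$ to obtain the factor $b_k$. The only discrepancy is a harmless labelling issue (the bound you cite as ``Lemma~4.4'' is the unlabelled lemma at the end of \S3).
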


\begin{proof}
    Since $j\un i \in A_{n+1,k+1}$ if and only if  $\un i \in A_{n,k}$ and $j = i_1$, we may write
        \begin{align*}
                X_{n+1,k+1}
            &=
                \sum_{\un i \in A_{n,k}}
                    \area(\De_{i_1\un i})^\de
                    \diam(\De_{i_1\un i})^{1-\de}
            \\
            &\leq
                \sum_{\un i \in A_{n,k}}
                    \left(
                        \frac
                            {k+2}
                            {k+3}
                    \right)^{3\de}
                    \left(
                        \frac
                            {k+2}
                            {k+3}
                    \right)^{\la(1-\de)}
                    \area(\De_{\un i})^\de
                    \diam(\De_{\un i})^{1-\de}
            \\
            &=
                b_k X_{n,k},
        \end{align*}
    where we have applied Lemmas \ref{area} and \ref{diameter}.
\end{proof}

\subsection{Bounds of terms indexed by words}

We can bound $X_{n+1,\un v}$, $n > m,$ with $\un v \in \calv\cup\{m\}$, using the matrix $B$ and the constants $a_k$, as formulated in the next lemma.

\begin{lemma}[Word Lemma]\label{word}
    Let $\de\in (0,1)$, $n > m$ and $\un v \in \calv \cup \{m\}$. Then, recalling $\ostar = (1,{2^m})$,  there exists a positive sequence $c_n$ such that $\sum_n c_n < \infty$ and
        \begin{align*}
                X_{n+1,\ostar}
            &\leq
                c_n
            +
                \sum_{k=m}^{n-1}
                    a_k
                    X_{n,k}
            \qquad
                \hbox{and}
            \\
                X_{n+1,\un v}
            &\leq
                \sum_{\un w \in \calv}
                    B_{\un v, \un w}
                    X_{n,\un w}
            \qquad
            \hbox{if}
            \ %
            \un v \neq \ostar.
        \end{align*}
\end{lemma}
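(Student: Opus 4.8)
The proof of the Word Lemma will mirror the structure of the proof of the Number Lemma (Lemma \ref{number}): we partition the preimages appropriately, apply the area and diameter estimates (Lemmas \ref{area} and \ref{diameter}), and collect terms. The key combinatorial input is the structure of the adjacency relation $\mapsto_j$. Recall that a word $\un i \in A_{n+1,\un v}$ is of the form $\un i = j\un i'$ for some $j\in\{1,2,3\}$ and $\un i' \in \{1,2,3\}^n$; I would track which partition element $\un i'$ lies in, depending on $\un v$ and $j$.

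For the second estimate (the case $\un v \neq \ostar$), write $\un v = (1^k,2,v_{k+2},\dots,v_{m+1})$ with $1\le k\le m-1$. If $\un i = j\un i' \in A_{n+1,\un v}$, then the truncation $(i_1,\dots,i_{m+1})\sim \un v$, which forces the truncation of $\un i'$ to be equivalent to some $\un w \in \calv$ with $\un w \mapsto_j \un v$; in fact the adjacency definition is set up precisely so that this correspondence is a bijection between the relevant $\un i$ and the $\un i'$ lying in $A_{n,\un w}$ for the (unique, for each $j$) such $\un w$. Moreover $\un i' \in A_{n,k'}$ for the appropriate $k'$ read off from $\un w$, so the point $x = T_{\un i'}(\text{anything})$ lies in $\De_{\un w}$; applying Lemma \ref{area} to bound $\area(\De_{j\un i'})/\area(\De_{\un i'})$ by $\max_{x\in\De_{\un i'}}(2-x_j)^{-3} \le \max_{x\in\De_{\un w}}(2-x_j)^{-3}$ (using $\De_{\un i'}\subseteq\De_{\un w}$) and Lemma \ref{diameter} similarly with exponent $\la$, one gets a factor of $\max_{x\in\De_{\un w}}(2-x_j)^{-3\de-\la(1-\de)} = B_{\un v,\un w}$ in front of $\area(\De_{\un i'})^\de\diam(\De_{\un i'})^{1-\de}$. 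Summing over $\un i'\in A_{n,\un w}$ and over the (at most one per $j$) admissible $\un w$ yields $X_{n+1,\un v} \le \sum_{\un w\in\calv} B_{\un v,\un w} X_{n,\un w}$, with the convention $B_{\ostar,\un v}=0$ explaining why $\ostar$ contributes nothing on the right here.

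For the first estimate (the case $\un v = \ostar$), the preimages $\un i'$ of a word $\un i = j\un i' \in A_{n+1,\ostar}$ come in two flavours: either the truncation of $\un i'$ is already equivalent to a word in $\calv$ (contributing, as above, terms $\sum_{\un w} B_{\ostar,\un w} X_{n,\un w} = 0$ since $B_{\ostar,\cdot} \equiv 0$ — wait, more carefully: the $\mapsto_j$ relation can land on $\ostar = (1,2^m) \in \calv_1$, so there ARE $\un w$ with $\un w \mapsto_j \ostar$, but these have $\un w$ ranging over the words $(1^{m-1},2)\mapsto_1$-type or $(j,1^{m-1},2)\mapsto_j$-type; I will need to separate these from the "mostly $1$s" preimages), or $\un i'$ lies in some $A_{n,k}$ with $k\ge m$ (i.e. $\un i'$ starts with $\ge m$ copies of a single symbol). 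The latter give, via Lemmas \ref{area}, \ref{diameter} and the third bound of Lemma 3.6, a factor $\le a_k$ in front of $X_{n,k}$, summing to $\sum_{k=m}^{n-1} a_k X_{n,k}$. The three constant words $(j^n)$ form a negligible set: $\area(\De_{j^n})^\de\diam(\De_{j^n})^{1-\de}$ decays geometrically in $n$ (since $\area(\De_{j^n}) \asymp n^{-?}$ and $\diam(\De_{j^n}) \to 0$ with $2-x_j \to 1$ slowly — here I would use the explicit bounds $2-x_j^{-1} \le (k+2)/(k+3)$ at $k=n$), so setting $c_n$ to be (a constant times) this quantity plus the contribution of the finitely many "boundary" words gives a summable sequence $\sum_n c_n < \infty$.

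**The main obstacle.** The delicate point is the exact bookkeeping in the $\un v=\ostar$ case: verifying that the decomposition of $A_{n+1,\ostar}$ into preimages is exhaustive and non-overlapping, that the "$1^k 2 \dots$" preimages with $k\ge m$ are precisely those captured by the $a_k$ terms, that the few remaining words are genuinely finite in number (uniformly in $n$) and contribute a summable $c_n$, and that the $\mapsto_j$ adjacency really does biject correctly — in particular checking the third bound of Lemma 3.6 is what produces exactly the two-term formula defining $a_k$ (the $(2-x_j)^{-3\de}$ and $(2-x_l)^{-3\de}$ split corresponding to the two choices $j \in \{2,3\}$ of leading symbol that keep us in $\calv_1$). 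The other routine-but-careful part is justifying the passage $\De_{\un i'}\subseteq\De_{\un w}$ so that $\max_{x\in\De_{\un i'}}(2-x_j)^{\cdot} \le \max_{x\in\De_{\un w}}(2-x_j)^{\cdot}$, which is where the matrix entry $B_{\un v,\un w}$ (defined as a max over $\De_{\un w}$, not over the smaller $\De_{\un i'}$) gets its meaning.
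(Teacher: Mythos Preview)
Your approach matches the paper's and the second inequality is handled correctly, including the key inclusion $\De_{\un i'}\subset\De_{\un w}$ that justifies the definition of $B_{\un v,\un w}$. Two points to tighten in the $\ostar$ case:

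\emph{(i) The combinatorics is cleaner than you suggest.} If $j\un i'\in A_{n+1,\ostar}$, then $(j,i'_1,\dots,i'_m)\sim(1,2^m)$, which forces $j\neq i'_1$ and $i'_1=\cdots=i'_m$. Hence $\un i'\in\bigcup_{k=m}^{n-1}A_{n,k}\cup\{(1^n),(2^n),(3^n)\}$, full stop. There are \emph{no} preimages with truncation in $\calv$; your ``first flavour'' is empty, and the detour through whether any $\un w\mapsto_j\ostar$ (none does) is unnecessary. The $a_k$ factor then comes exactly as you say: the second bound of Lemma~3.6 controls the diameter ratio by $\big(\tfrac{k+1}{2k+1}\big)^{\la}$, and the third bound handles the sum $\sum_{j\neq i'_1}(2-x_j)^{-3\de}$ of the two area ratios.

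\emph{(ii) The decay of $c_n$ is polynomial, not geometric.} The residual term is $c_n=6\,\area(\De_{(2,1^n)})^\de\diam(\De_{(2,1^n)})^{1-\de}$ (six, by symmetry: three constant words $\un i'$, two choices of $j\neq i'_1$). A direct computation of the vertices of $T_2T_1^n(\De)$ gives $\area\asymp n^{-2}$ and $\diam\asymp n^{-1}$, so $c_n\asymp n^{-1-\de}$, which is summable for any $\de>0$. Your claim of geometric decay is incorrect (and the $b_k$-type bound you cite, with $2-x_{i_1}\to 1$, would not even give decay to zero).
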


\begin{proof}
    For the first inequality, note that $j\un i \in A_{n+1,\ostar}$ if and only if $j\neq i_1$ and $\un i \in \bigcup_{k=m}^{n-1} A_{n,k} \cup \{(1^n),(2^n),(3^n)\}$. Therefore, applying  Lemmas \ref{diameter} and \ref{area} in turn and using symmetry, one has that
        \begin{align*}
                X_{n+1,\ostar}
            &=
                6
                \area(\De_{(2,1^n)})^\de
                \diam(\De_{(2,1^n)})^{1-\de}
            +
                \sum_{k=m}^{n-1}
                \sum_{\un i \in A_{n,k}}
                \sum_{j \neq i_1}
                    \area(\De_{j\un i})^\de
                    \diam(\De_{j\un i})^{1-\de}
            \\
            &\leq
                c_n
            +
                \sum_{k=m}^{n-1}
                \sum_{\un i \in A_{n,k}}
	                \left(
	                     \frac
	                         {k+1}
	                         {2k+1}
	                \right)^{\la(1-\de)}
                	\diam(\De_{\un i})^{1-\de}
                \sum_{j \neq i_1}
                    \area(\De_{j\un i})^\de
            \\
            &\leq
                c_n
            +
                \sum_{k=m}^{n-1}
                \sum_{\un i \in A_{n,k}}
                    \left(
                        \frac
                            {k+1}
                            {2k+1}
                    \right)^{\la(1-\de)}
                    \diam(\De_{\un i})^{1-\de}
                    \left(
                        \left(
                            \frac
                                {k+1}
                                {2k+1}
                        \right)^{3\de}
                    +
                        \frac 1 {8^\de}
                    \right)
                    \area(\De_{\un i})^\de
            \\
            &=
                c_n
            +
                \sum_{k=m}^{n-1}
                    a_k X_{n,k},
        \end{align*}
    where $c_n := 6 \area(\De_{(2,1^n)})^\de \diam(\De_{(2,1^n)})^{1-\de}$, as required.

    Regarding the second inequality: The combinatorics above and in the previous proof show that $\un i \in \bigcup_{k=m}^{n-1} A_{n,k} \cup \{(1^n),(2^n),(3^n)\}$ implies $j\un i \not \in A_{n+1,\un v}$ for every $\un v \in \calv \cup\{m\} \setminus \{\ostar\}$. Fixing such a $\un v$ and considering the contrapositive, one sees that $j\un i \in A_{n+1,\un v}$ implies that $\un i \in A_{n,\un w}$ and $\un w \mapsto_{j'}\un v$ for some $\un w, j'$. Assuming $(i_1,\ldots, i_{m+1}) = \un w$ for simplicity, we have that $j' = j$ and $\De_{\un i}\subset \De_{\un w}$. Hence, by Lemmas \ref{area} and \ref{diameter},
        $$
                \frac
                    {
                     \area(\De_{j\un i})^\de
                     \diam(\De_{j\un i})^{1-\de}
                    }
                    {
                     \area(\De_{\un i})^\de
                     \diam(\De_{\un i})^{1-\de}
                    }
            \leq
                \max_{\un x \in \De_{\un w}}
                    (2-x_j)^{-3\de - \la(1-\de)}
            =:
                B_{\un v,\un w}.
        $$
    Thus, we may write the following.
        \begin{align*}
                X_{n+1,\un v}
            =
                \sum_{j=1}^3
                    \sum_{\substack{\un w \in \calv:\\[2pt]\un w \mapsto_j \un v}}
                        \sum_{\un i \in A_{n,\un w}}
                            \area(\De_{j\un i})^\de
                            \diam(\De_{j\un i})^{1-\de}
            &\leq
                \sum_{\substack{\un w \in \calv:\\B_{\un v,\un w}\neq 0}}
                    \sum_{\un i \in A_{n,\un w}}
                        B_{\un v,\un w}
                        \area(\De_{\un i})^\de
                        \diam(\De_{\un i})^{1-\de}
            \\
            &=
                \sum_{\un w \in \calv}
                    B_{\un v,\un w}
                    X_{n, \un w},
        \end{align*}
    as required.

    Finally, a direct computation of the vertices of $\De_{(2,1^n)} = T_2T_1^n(\De)$ shows that its diameter and area are proportional to $n^{-1}$ and $n^{-2}$ respectively, as $n \to \infty$. Hence $c_n$ is proportional to $n^{-1-\de}$ as $n\to \infty$, and is thus summable for every $\de>0$, completing the proof.
\end{proof}

            \section{Renewal Theorem}

We now use the iterative bounds of the last section to show that $X_n \to 0$ as $n \to \infty$, under the hypotheses of Theorem \ref{long}. This uses the following mild adaptation of the classical renewal theorem of Feller \cite[p.330]{feller}.

\begin{thm}
	Suppose that the sequences $(Y_n)_{n=0}^\infty$,
	$(\mu_n)_{n=1}^\infty$ and $(\nu_n)_{n=1}^\infty$ are non-negative and satisfy $\sum_{n=1}^\infty \mu_n < 1$, $\sum_{n=1}^\infty \nu_n < \infty$ and
		$$
			Y_n \leq \nu_n + \sum_{k=1}^n \mu_k Y_{n-k}
		$$
	for each $n\in \NN$. Then $Y_n \to 0$ as $n \to \infty$.
\end{thm}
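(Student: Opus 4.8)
The plan is to absorb the inequality into an auxiliary sequence satisfying the corresponding \emph{equality}, and then recognise that equality as the renewal equation, whose solution is manifestly summable once $\sum_n\mu_n<1$. Concretely, first I would define $(Z_n)_{n\ge0}$ by $Z_0:=Y_0$ and $Z_n:=\nu_n+\sum_{k=1}^n\mu_kZ_{n-k}$ for $n\ge1$. Since all the sequences are non-negative, an induction on $n$ gives $Y_n\le Z_n$ for all $n$ --- the base case is $Y_0=Z_0$, and for $n\ge1$ one has $Y_n\le\nu_n+\sum_{k=1}^n\mu_kY_{n-k}\le\nu_n+\sum_{k=1}^n\mu_kZ_{n-k}=Z_n$ using the inductive hypothesis on the indices $n-k<n$. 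So it is enough to show $Z_n\to0$.

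Next I would introduce the renewal sequence. Let $\mu^{*0}=(1,0,0,\dots)$, $\mu^{*(j+1)}=\mu^{*j}*\mu$, and $U_n:=\sum_{j\ge0}(\mu^{*j})_n$; since $\mu$ is supported on $\{1,2,\dots\}$, only the terms with $j\le n$ contribute, so $U_n$ is a finite non-negative sum and $U_0=1$. Interchanging non-negative sums,
\[
	\sum_{n\ge0}U_n=\sum_{j\ge0}\Big(\sum_{n\ge1}\mu_n\Big)^{j}=\frac{1}{1-\sum_{n\ge1}\mu_n}<\infty,
\]
precisely because $\sum_n\mu_n<1$. Setting $\nu_0:=Y_0$, one checks by substitution (using $U=\mu^{*0}+\mu*U$) that $n\mapsto\sum_{k=0}^nU_{n-k}\nu_k$ solves the recurrence defining $Z_n$; since that recurrence has a unique solution, $Z_n=\sum_{k=0}^nU_{n-k}\nu_k$.

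Finally, since $\sum_{n\ge0}\nu_n<\infty$ (with $\nu_0=Y_0<\infty$) and $\sum_{n\ge0}U_n<\infty$, one more interchange of non-negative sums gives $\sum_{n\ge0}Z_n=\big(\sum_nU_n\big)\big(\sum_n\nu_n\big)<\infty$; a convergent series has general term tending to $0$, so $Z_n\to0$ and hence $Y_n\to0$. The only thing needing care is the bookkeeping of the second step --- that $U_n$ is well defined, that $\sum_nU_n=(1-\sum_n\mu_n)^{-1}$, and that $U*\nu$ solves the recurrence --- but all rearrangements are legitimate by Tonelli's theorem, since every quantity in sight is non-negative, and the strict inequality $\sum_n\mu_n<1$ is exactly what makes the relevant geometric series converge. (One may package the same computation with generating functions: with $M(x)=\sum_{n\ge1}\mu_nx^n$ and $N(x)=Y_0+\sum_{n\ge1}\nu_nx^n$, the recurrence is $Z(x)=N(x)+M(x)Z(x)$, i.e.\ $Z(x)=N(x)/(1-M(x))$, and evaluating at $x=1$ yields $\sum_nZ_n=N(1)/(1-M(1))<\infty$.)
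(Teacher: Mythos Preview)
Your argument is correct. Defining $Z_n$ by the corresponding equality, proving $Y_n\le Z_n$ by induction, identifying $Z$ as the convolution $U*\nu$ with $U=\sum_{j\ge0}\mu^{*j}$, and then using $\sum_n U_n=(1-\sum_n\mu_n)^{-1}<\infty$ together with $\sum_n\nu_n<\infty$ to conclude $\sum_n Z_n<\infty$ (hence $Z_n\to0$) is a clean and complete proof; all interchanges of sums are justified by non-negativity, and the uniqueness of the solution to the defining recurrence is immediate since $Z_n$ is determined from $Z_0,\dots,Z_{n-1}$.

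As for comparison: the paper does not actually prove this statement --- it simply records it as ``a mild adaptation of the classical renewal theorem'' and cites Feller. Your proof is precisely the standard renewal-theoretic argument specialised to the defective case $\sum_n\mu_n<1$, made fully self-contained; in fact you obtain the slightly stronger conclusion $\sum_n Y_n<\infty$, which the paper does not state but which follows from your computation. So there is no methodological divergence to report: you have supplied the proof the paper leaves to the reference.
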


We apply this theorem with $Y_n = X_{n+m+1,\ostar}$ to give Theorem \ref{long}, which can be rewritten as follows.
\begin{thm}
    If $\de > \frac {1-\la}{3-\la} = 0.037\ldots$ satisfies
        \begin{equation}
                \sum_{k=1}^\infty
                    (B^k)_{m,\ostar}
                \color{black}
                \cdot
                \color{black}
                \sum_{k=m}^\infty
                    a_k
                    \prod_{i=m}^{k-1}
                        b_i
            \leq
                1,
        \label{cond}
        \end{equation}
    then $\dim_H(\calg) \leq 1 + \de$.
\end{thm}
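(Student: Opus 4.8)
The plan is to reduce to Lemma \ref{dimbound} and show $X_n\to0$. (We may assume $\de<1$, since $\dim_H(\calg)\le2$ makes the claim trivial for $\de\ge1$, and then Lemmas \ref{number} and \ref{word} apply.) Splitting $X_n=\sum_{\un v\in\calv}X_{n,\un v}+\sum_{k=m}^{n-1}X_{n,k}+\sum_{j=1}^3\area(\De_{(j^n)})^\de\diam(\De_{(j^n)})^{1-\de}$ via the partition of $\{1,2,3\}^n$, the three constant-word terms are $O(n^{-1-\de})\to0$ by the vertex computation already used for $c_n$ in Lemma \ref{word}. I would then reduce everything to the single sequence $R_n:=X_{n,\ostar}$: iterating the second inequality of Lemma \ref{word} inside $\calv\setminus\{\ostar\}$ — legitimate since $B_{\ostar,\cdot}=0$, so $\ostar$ occurs only as the terminal vertex of a $B$-walk — bounds each $X_{n,\un v}$, hence (via Lemmas \ref{number} and \ref{word}) each $X_{n,k}$, by a non-negative combination of $\{R_j\}_{j<n}$ with coefficients built from powers of $\hat B:=B|_{\calv\setminus\{\ostar\}}$, plus a remainder of size $O(\rho(\hat B)^n)$. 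One needs $\rho(\hat B)<1$; I would verify this from the finiteness of $\sum_{q\ge1}(B^q)_{m,\ostar}$ (part of \eqref{cond}) and the strong connectivity of the digraph of $\hat B$ read off from the adjacency rules. Then $R_n\to0$ forces $\sum_{\un v}X_{n,\un v}\to0$ and $\sum_kX_{n,k}\to0$ by dominated convergence, so $X_n\to0$.

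It remains to prove $R_n\to0$ via the renewal theorem applied to $Y_n:=R_{n+m+1}$. Combining $R_{n+1}\le c_n+\sum_{k=m}^{n-1}a_kX_{n,k}$ (Lemma \ref{word}) with $X_{n,k}\le\big(\prod_{i=m}^{k-1}b_i\big)X_{n-k+m,\,m}$ (iterating Lemma \ref{number}), then $X_{n-k+m,\,m}\le\sum_{\un w}B_{m,\un w}X_{n-k+m-1,\un w}$ (Lemma \ref{word} with $\un v=m$), and then the $\calv$-iterate above for $X_{\cdot,\un w}$, one collects terms into
\[
R_{n+1}\ \le\ \nu_n+\sum_{r\ge1}\mu_r\,R_{n+1-r},
\]
where $\mu_r$ is the total weight of length-$r$ first-return loops at $\ostar$ in the weighted digraph of $D$. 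Each such loop factors uniquely as a descent $\ostar\to k\to k-1\to\cdots\to m$ (weight $a_k\prod_{i=m}^{k-1}b_i$) followed by a walk from $m$ that enters $\calv_{m-1}\subset\calv$ and returns to $\ostar$ without meeting it in between (weights summing over lengths to $\sum_{q\ge1}(B^q)_{m,\ostar}$, since a $B$-walk ending at $\ostar$ automatically avoids $\ostar$ internally). Hence $\sum_r\mu_r=\big(\sum_{k\ge m}a_k\prod_{i=m}^{k-1}b_i\big)\big(\sum_{q\ge1}(B^q)_{m,\ostar}\big)$, the quantity in \eqref{cond}; this block-matrix identity is exactly the rewriting of $\sum_{k\ge1}(D^k)_{1,1}$. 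The error term $\nu_n$ gathers $c_n$, the constant-word contributions and the $\hat B$-remainders, all $O(n^{-1-\de})$ or exponentially small, so $\sum_n\nu_n<\infty$. Note that $\de>\tfrac{1-\la}{3-\la}$, i.e.\ $3\de+\la(1-\de)>1$, is exactly what makes $\sum_{k\ge m}a_k\prod_{i=m}^{k-1}b_i$ converge, since $\prod_{i=m}^{k-1}b_i=\big(\tfrac{m+2}{k+2}\big)^{3\de+\la(1-\de)}$ while $a_k\not\to0$.

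With $\sum_r\mu_r\le1$ (by \eqref{cond}) and $\sum_n\nu_n<\infty$, the renewal theorem gives $Y_n\to0$ when $\sum_r\mu_r<1$; then $R_n\to0$, $X_n\to0$, and $\dim_H(\calg)\le1+\de$. For the borderline case $\sum_r\mu_r=1$, I would use that the left side of \eqref{cond} is continuous and strictly decreasing in $\de$ on $\big(\tfrac{1-\la}{3-\la},1\big)$ — each $B_{\un v,\un w}=\max_x(2-x_j)^{-(3\de+\la(1-\de))}$, each $a_k$, and each $b_k$ being strictly decreasing there — so \eqref{cond} is strict at every $\de'\in(\de,1)$, whence $\dim_H(\calg)\le1+\de'$ for all such $\de'$ and therefore $\dim_H(\calg)\le1+\de$. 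The step I expect to be the main obstacle is the bookkeeping of the first two paragraphs: turning the infinitely many component inequalities over a time-growing index set into one honest scalar renewal inequality, confirming that the first-return weights sum to the product in \eqref{cond} through the block structure of $D$, and securing $\rho(\hat B)<1$ so the remainders are summable; granting that, the renewal theorem and the passage back to $\dim_H$ are routine.
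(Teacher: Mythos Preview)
Your proposal is correct and follows essentially the same route as the paper: derive a renewal inequality for $X_{n,\ostar}$ from Lemmas \ref{number} and \ref{word}, identify $\sum_r \mu_r$ with the product in \eqref{cond}, check $\rho(B)<1$ and summability of the remainders (this is where $\de>\tfrac{1-\la}{3-\la}$ enters), apply the renewal theorem, and handle the equality case by monotonicity in $\de$. The one place the paper is more economical is the passage from $X_{n,\ostar}\to 0$ to $X_n\to 0$: instead of your component-by-component dominated-convergence argument, the paper notes that prepending the fixed prefix $(1,2^m)$ to any $\un i\in\{1,2,3\}^n$ lands in $A_{n+m+1,\ostar}$ and changes $\area^\de\diam^{1-\de}$ by at most a fixed factor (via a lower bound on the minimum singular value of $DT_j$), yielding $X_n\le C\,X_{n+m+1,\ostar}$ in one stroke.
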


\begin{proof} Assume that \eqref{cond} holds with a strict inequality (the case of equality follows in the limit, since the LHS is decreasing in $\de$), and that $\de<1$ (the conclusion being otherwise trivial). Applying Lemma \ref{number} repeatedly in the first estimate of Lemma \ref{word} gives
    $$
            X_{n+1,\ostar}
        \leq
            c_n
        +
            \sum_{k=m}^{n-1}
                a_k
                \prod_{i=m}^{k-1}
                    b_i
            X_{n+m-k,m}.
    $$
Moreover, the second estimate of Lemma \ref{word} extends inductively to give, for any  $k \in \NN,$
    $$
            X_{n+m+k,m}
        \leq
            \sum^{n+m-1}_{j=1}
                (B^j)_{m,\ostar}
                X_{n+m,\ostar}
        +
            \sum_{\substack{\un v \in \calv:\\\un v \neq \ostar}}
                (B^{n+k-1})_{m,\un v}
                X_{m+1,\un v}.
    $$

 Putting these two together gives the renewal-style inequality
        $$
                X_{n+1, \ostar}
            \leq
                \nu_n
            +
                \sum_{k=1}^{n-m}
                    \mu_k
                    X_{n+1-k,\ostar}
            ,
        $$
    with coefficients
        $$
                \mu_k
            =
                \sum_{i+j=k}
                    (B^i)_{m,\ostar }
                    a_{m+j}
                    \prod_{l=m}^{m+j-1}
                        b_l
        $$
    and remainder term
        $$
                \nu_n
            =
                    c_n
                +
                    \sum_{k=m}^{n-1}
                        a_k
                        \prod_{i=m}^{k-1}
                            b_i
                 \color{black}
                 \cdot
                 \color{black}                          
                    \sum_{\un v \in \calv\setminus\{\ostar\}}
                        (B^{n-k-1})_{m,\un v}
                        X_{m+1,\un v}.
        $$

    Consider the other hypotheses of the renewal theorem. The hypothesis on the coefficients is precisely our assumption that strengthens \eqref{cond}:
        $$
                \sum_{k=1}^\infty
                    \mu_k
            =
                \sum_{k=0}^\infty
                    (B^k)_{m, \ostar}
	             \color{black}
	             \cdot
	             \color{black}
                \sum_{k=m}^\infty
                    a_k
                \prod_{i=m}^{k-1}
                   b_i
            <
                1.
        $$
    Regarding the summability of the remainder terms $\nu_n$: By elementary combinatorics, $B^{m+1}$ has only one zero row and zero column, so this inequality also ensures that the spectral radius $\rho = \rho(B) < 1$. In any case, by the Perron-Frobenius theorem there exists $C>0$ such that
        $$
                \sum_{n=1}^\infty
                    \nu_n-c_n
            \leq
                C
                \sum_{n=1}^{\infty}
                \sum_{k=m}^{n-1}
                    a_k
                    \prod_{i=m}^{k-1}
                        b_i
                \rho^{n-k}
            =
                C
                    \sum_{k=m}^{\infty}
                        a_k
                        \prod_{i=m}^{k-1}
                            b_i
                    \sum_{n=1}^{\infty}
                        \rho^n
            .
        $$
    Thus, since $\rho \in(0,1)$ and
        $$
                a_k
                \prod_{i=m}^{k-1}
                    b_i
            =
                \mathcal O\big(k^{-3\de-\la(1-\de)}\big)
                \qquad (k\to\infty),
        $$
    we see that the right hand side is summable if $\de > \frac{1-\la}{3-\la}$. That is, $\sum_n \nu_n < \infty$, and the renewal theorem gives $X_{n, \ostar} \to 0$ as $n \to \infty$.

    Finally, we use this to bound $X_n$ via the following lemma.
\begin{lemma}
    Given $m \in \NN$ and $\de\in(0,1)$, there exists $C > 0$ such that $X_n \leq  C X_{n+m+1, \ostar}$.
\end{lemma}

\begin{proof}
    For each $\un i \in\{1,2,3\}^n$, the word $\un i' = (1,2^m,i_1,\ldots,i_n)\in A_{n+m+1,\ostar}$. This word satisfies
        $$
                \frac
                    {\area(\De_{\un i'})^\de
                     \diam(\De_{\un i'})^{1-\de}}
                    {\area(\De_{\un i})^\de
                     \diam(\De_{\un i})^{1-\de}}
            =
                \frac
                    {\area(T_1T_2^m\De_{\un i})^\de
                     \diam(T_1T_2^m\De_{\un i})^{1-\de}}
                    {\area(\De_{\un i})^\de
                     \diam(\De_{\un i})^{1-\de}}
            \geq
                K^{(1+\de)(m+1)},
        $$
    where $K$ is obtained by bounding the \textit{minimum} singular value of $DT_j(x)$ uniformly over $x \in \De$: namely, applying the estimates of section 3,
        $$
                \big\|
                    D(T_j^{-1})
                    \big(T_j(x)\big)
                \big\|
            =
                \frac
                    {\Jac T_j(x)}
                    {\|DT_j(x)\|}
            \geq
                (2-x_j)^{\la-3}
            \geq
                2^{\la-3}
            =:
                K.
        $$
    In particular, with $C = K^{(1+\de)(m+1)}$
        $$
                X_{n+m+1,\ostar}
            \geq
                \sum_{|\un i| = n}
                    \area(\De_{\un i'})^\de
                     \diam(\De_{\un i'})^{1-\de}
            \geq
                C
                X_n,
        $$
    as required.
\end{proof}

Thus, $X_{n, \ostar} \to 0$ as $n \to \infty$ implies that $X_{n} \to 0$ as $n \to \infty$, and applying Lemma \ref{dimbound} completes the proof of Theorem \ref{long}.
\end{proof}

\section{Numerical estimates}

  \begin{table}[htb]
	\begin{center}
  		\begin{tabular}{c|cc@{\qquad\qquad}rc|c}
			$m$ &   $\de_m +1$ &&&$m$ &   $\de_m +1$  \\\cline{1-2} \cline{5-6}
			\vphantom{\Huge K}2&1.8285&&&6&1.7545\\
			3&1.7982&&&7&1.7485\\
			4&1.7771&&&8&1.7444\\
			5&1.7635&&&9&1.7415\\
		\end{tabular}
	\end{center}
		\caption{Upper bounds $\de_m+1$ on $\dim_H(\calg)$ for different choices of $m$, rounded upwards to four decimal places.}
	\end{table}


\color{black}

The usefulness of Theorem \ref{long} lies in our ability to check \eqref{cond} for given values $\de$ and $m$. Since the left hand side of \eqref{cond} is decreasing and continuous in each of its two factors, the infimal such $\de$  for which the theorem holds is the unique value $\de_m$ yielding equality.

The values of these $\de_m$ for $m$ up to 9 are given in Table 1. To check the condition for given $\de$ and $m$, we used Wolfram Mathematica to codify $B$ as a matrix and to compute the first factor of \eqref{cond},
	$$
			\sum_{k=0}^\infty
				(B^k)_{m, \ostar}
		=
			\left(
				\big(
					I-B
				\big)^{-1} 
			\right)_{m,\ostar},
	$$
as a ratio of two determinants (via Cramer's Rule); and the second factor,
	\begin{align*}
			\sum_{k=m}^\infty
				a_k
			\prod_{i=m}^{k-1}
				b_i
		&=
			\sum_{k=m}^\infty
				\left[\left(
					\frac
						{k+1}
						{2k+1}
				\right)^{3\de + \la (1-\de)}
	 		+
	 			\frac{1}{8^\de}
	 			\left(
	 				\frac
	 					{k+1}
	 					{2k+1}
	 			\right)^{\la (1-\de)}
	 		\right]
			\prod_{i=m}^{k-1}
	   			\left(
	   				\frac
	   					{i+2}
	   					{i+3}
	   			\right)^{3\de + \la (1-\de)}
	   	\\
	   	&=
	   		\sum_{k=m}^\infty
				\left(
					\frac
						{(k+1)(m+2)}
						{(2k+1)(k+2)}
				\right)^{3\de + \la (1-\de)}
	 		+
	 			\frac{1}{8^\de}
	 			\left(
	 				\frac
	 					{(k+1)(m+2)}
	 					{(2k+1)(k+2)}
	 			\right)^{\la (1-\de)}
				(k+2)^{-3\de},
	\end{align*}
using the inbuilt \verb|NSum| function. For the precise code, explanation and a breakdown of the confirmation, see \cite{github} and the documentation therein. In particular, we rely on the closed-source Wolfram Mathematica programme (with its inbuilt error bounds) to accurately
\begin{itemize}
	\item calculate the determinant of large matrices, and
	\item estimate the polynomially decaying sum.
\end{itemize}

Both calculations can be rigorised (see \cite{github} for details) and we encourage all efforts to do so. However, any changes we observed were too insignificant to affect the final values given here: accuracy to three decimal places in each factor appears to suffice.

That $\de_m$ is computed only up to $m=9$ owes to the exponential size of $B$, which places a commensurable computing burden on the hardware. We note that slightly larger choices of $m$ should be possible with more efficient compiled programming languages.
\color{black}
\begin{rem}
	The elementary nature of this method suggests a similar approach might be applied to other related examples of sets to give bounds on their dimension\color{black}; though likely with different limiting returns.
\end{rem}
\color{black}

\appendix

\section{Proof that $g$ is increasing.}

Here, we give an elementary but delicate proof of the following technical fact, used in the proof of Lemma \ref{diameter}:

\begin{lemma}
	The function $g:[0,1]\to \RR$,
		$$
				g(t) 
			:= 
				f(t)(2-t)^{2\la}
			=
				\frac 13
				(2-t)^{-1-2/\sqrt3}
				\left(
					2t^2 -7t + 8
					+
					2(1-t)
					\sqrt{
						t^2 - 5t + 7
					}
				\right) 
		$$
	is increasing. In particular, $g(t) \leq g(1) = 1$ for all $t$.
\end{lemma}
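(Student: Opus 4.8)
The plan is to show $g'(t)\ge 0$ on $[0,1]$ by a direct (if delicate) computation. Write $\beta := 1+\frac{2}{\sqrt3}$, so that $2\la-4=-\beta$, and set $s(t):=\sqrt{t^2-5t+7}$; the radicand is strictly positive on all of $\RR$ since its discriminant is $25-28<0$, and $2-t>0$ on $[0,1]$, so $g$ is smooth there. With $P(t):=2t^2-7t+8$ we have $3g(t)=(2-t)^{-\beta}\big(P(t)+2(1-t)s(t)\big)$. Differentiating, clearing the positive factor $\tfrac13(2-t)^{-\beta-1}$, and then multiplying by $s(t)>0$, the inequality $g'(t)\ge 0$ becomes one of the form
$$ A(t)\,s(t)+C(t)\ \ge\ 0 \qquad (0\le t\le 1), $$
where $A$ and $C$ are explicit polynomials (of degrees $2$ and $3$) with coefficients in $\mathbb{Q}(\sqrt3)$ coming from $\beta$: one finds $A=\beta P+(2-t)(4t-7)$, while $C$ is an explicit combination of the factors $(1-t)$, $(2-t)$ and $s^2=t^2-5t+7$ produced by $\beta\cdot 2(1-t)s$, by $2(1-t)s'=\frac{(1-t)(2t-5)}{s}$, and by $-2s$.

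Next I would record two elementary sub-facts. First, $A(t)>0$ on $[0,1]$: this is a quadratic with positive leading coefficient $2\beta-4>0$, and checking its endpoint values (and, if necessary, its vertex) on $[0,1]$ is routine. Second, a short cubic estimate shows $C(t)<0$ on $[0,1]$, so the summand $C(t)$ never helps and one is always in the "hard'' regime where $A(t)s(t)$ must beat $-C(t)>0$. (If one prefers to avoid proving $C<0$ everywhere, simply split into the trivial case $C(t)\ge 0$, where the inequality is immediate from $A>0$, and the case $C(t)<0$.)

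In the regime $C(t)<0$, both sides of $A(t)s(t)\ge -C(t)$ are positive, so squaring is an equivalence, and it remains to prove the polynomial inequality
$$ Q(t):=A(t)^2\big(t^2-5t+7\big)-C(t)^2\ \ge\ 0 \qquad (0\le t\le 1). $$
A priori $\deg Q=6$, but the degree-$6$ coefficients of the two terms coincide (both equal $(2\beta-4)^2$), so in fact $\deg Q\le 5$. Moreover, evaluating at $t=1$ gives $A(1)=2\sqrt3$, $s(1)=\sqrt3$, $C(1)=-6$, hence $A(1)s(1)+C(1)=0$ and so $Q(1)=0$; thus $(1-t)$ divides $Q$, and I would factor $Q(t)=(1-t)\,R(t)$ (checking whether a second factor $(1-t)$ comes out) and reduce to proving $R(t)\ge 0$ on $[0,1]$.

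The main obstacle is precisely this last step. The polynomial $R$ has coefficients in $\mathbb{Q}(\sqrt3)$, and the original inequality is tight at $t=1$ (where $g(1)=1$ is the maximum), so no crude bound will close it. I would handle $R$ by writing $R(t)=R_0(t)+R_1(t)\sqrt3$ with $R_0,R_1\in\mathbb{Q}[t]$ and then either (i) produce a Pólya/Bernstein-type positivity certificate, i.e.\ exhibit $R$ as a combination with non-negative coefficients of the monomials $t^{\,j}(1-t)^{\,k}$ that are manifestly non-negative on $[0,1]$; or (ii) isolate the $\sqrt3$-term, square once more to obtain an ordinary rational polynomial, and apply Sturm's theorem (or explicit root isolation) on $[0,1]$, taking care at the finitely many points where $R_0$ or $R_1$ vanishes. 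Either route is a finite and in-principle routine computation; all the delicacy lies in bookkeeping the $\sqrt3$ and in the tightness of the estimate at $t=1$.
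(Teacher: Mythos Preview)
Your strategy is correct and begins exactly as the paper does: differentiate, clear the positive factor $(2-t)^{-\beta-1}$, multiply through by $s$, and reduce to showing $A(t)s(t)+C(t)\ge 0$ (the paper writes this as $h(t)\ge 0$ with $h=3(As+C)$ and $\alpha=s$). The genuine divergence is in how the square root is then eliminated. You square to obtain $Q=A^2s^2-C^2$, note the cancellation of the top coefficient and the root at $t=1$, and defer the residual polynomial to a Bernstein or Sturm certificate. The paper instead sandwiches $\alpha(t)$ between its degree-$2$ and degree-$3$ Taylor polynomials at $t=1$ (legitimate because $\alpha^{(3)},\alpha^{(4)}>0$ on $[0,1]$), substitutes whichever bound is appropriate at each occurrence of $\alpha$ in $h$ according to the sign of its coefficient, and lands on an explicit polynomial lower bound that factors as $(1-t)^2$ times a visibly positive expression. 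The paper's route is therefore fully self-contained and elementary, while yours is more algorithmic but leaves the last positivity check unexecuted. One observation that would sharpen your endgame: in fact $g'(1)=0$ (equivalently $h'(1)=0$, as the paper's $(1-t)^2$ factorisation already reflects), so $As+C$ has a \emph{double} zero at $t=1$; hence $(1-t)^2\mid Q$ and the residual factor is only a cubic in $\mathbb{Q}(\sqrt3)[t]$, considerably easier to certify directly than your outline anticipates.
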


\begin{proof}
We simply show that $g' > 0$ on $(0,1)$. Writing $\alf(t) = \sqrt{t^2-5t+7} > 0$, we have
		$$
				g'(t)
			=
				\frac{h(t)}{\alf(t)}
				\cdot
				\frac{1}{9} \cdot
				(2-t)^{\color{black}-2 - 2/\sqrt{3}\color{black}}
				,
		$$
	where $h(t) \in \mathbb Q\left[t,\alf(t),\sqrt3\right]$ is given by
		\begin{multline*}
				h(t)
			:= 
				\left(6-4 \sqrt{3}\right) t^3
			+
				\left(\left(4 \sqrt{3}-6\right) \alf(t) +24 \sqrt{3}-39\right) t^2
			\\+
				\left(\left(24-14 \sqrt{3}\right) \alf(t) -48 \sqrt{3}+87\right) t
			+
				\left(16 \sqrt{3}-18\right) \alf(t)
			+
				28 \sqrt{3}-72.
		\end{multline*}
	It remains to show $h$ is positive, using Taylor approximants for $\alf$ based at 1. Namely, since
		$$
				\alf^{(3)}(t)
			= 
				\frac 98 
				\frac{5 - 2t}{\alf(t)^5},
			\qquad
				\alf^{(4)}(t)
			= 
				\frac9{16}
				\frac
					{16t^2 - 80t + 97}
					{\alf(t)^7}			
		$$
	are positive for all $t \in (0,1)$, the remainder term given by Taylor's Theorem for the degree two and degree three Taylor polynomials are negative and positive, respectively, hence these polynomials are upper and lower bounds, respectively, for $\alf$. More explicitly, for $t\in[0,1]$,
		$$
				\frac
					{t^3-t^2-25t+73}
					{16\sqrt3}
			\leq
				\alf(t) 
			\leq
 				\frac
 					{t^2 - 14t + 37}
					{8\sqrt3}.
		$$
	Applying these bounds everywhere in the definition of $h$ (according to the signs of the coefficients) reduces to the concrete polynomial bound
		$$
		\begin{aligned}
				h(t)
			&\geq
				\left(
					5\sqrt{3}+8
				+
					\left(2-\sqrt{3}\right)t^2(t+1)
				-
					3\left(\sqrt{3}+2\right)t
				\right)
					(1-t)^2.\cr
\end{aligned}
		$$
	It is easy to see that the right hand side is everywhere positive, hence so too are $h$ and $g'$.
\end{proof}


\end{document}